%%%%%%%%%%%%%%%%%%%%%%%%%%%%%%%%%%%%%%%%%
%%         This is the paper           %%
%%     On......................        %%
%%                by                   %%
%%            Oleg Gutik               %%
%%              Version                %%
%%           29 August 2025            %%
%%        Last changes by gutik        %%
%%%%%%%%%%%%%%%%%%%%%%%%%%%%%%%%%%%%%%%%%

\documentclass[12pt]{amsart}
\usepackage{amsmath,amsthm,amsfonts,amssymb,mathrsfs}
\usepackage{color}

\usepackage{tikz}

\usepackage{amssymb,cite}
\usepackage[colorlinks,plainpages,citecolor=magenta, linkcolor=blue, backref]{hyperref}%,bookmarksnumbered,

\usepackage{hyperref}
\date{\today}

\input xymatrix
\xyoption{all}

 \setlength{\textwidth}{18.6truecm}
 \setlength{\textheight}{25.truecm}
 \setlength{\oddsidemargin}{-30pt}
 \setlength{\evensidemargin}{-30pt}
 \setlength{\topmargin}{-35pt}

%\righthyphenmin=2

\usepackage{hyperref}
\input{xy}
 \xyoption{all}
 \xyoption{arc}

\newtheorem{theorem}{Theorem}[section]

\newtheorem{proposition}[theorem]{Proposition}%[section]
\newtheorem{corollary}[theorem]{Corollary}%[section]

\newtheorem{lemma}[theorem]{Lemma}%[section]
\theoremstyle{definition}

%[section]
%[section]
\newtheorem{remark}[theorem]{Remark}%[section]

\begin{document}

\title[On injective  endomorphisms of the semigroup $\boldsymbol{B}_{\mathbb{Z}}^{\mathscr{F}}$]{On injective endomorphisms of the semigroup $\boldsymbol{B}_{\mathbb{Z}}^{\mathscr{F}^2}$ with the two-element family $\mathscr{F}^2$ of inductive nonempty subsets of $\omega$}
\author{Oleg Gutik and Inna Pozdniakova}
\address{Ivan Franko National University of Lviv, Universytetska 1, Lviv, 79000, Ukraine}
\email{oleg.gutik@lnu.edu.ua, pozdnyakova.inna@gmail.com}

\keywords{Bicyclic monoid, extended bicyclic semigroup, inverse semigroup, bicyclic extension, endomorphism, injective.}

\subjclass[2020]{Primary 20M18; Secondary 20F29, 20M10}

\begin{abstract}
We describe injective endomorphisms of the semigroup $\boldsymbol{B}_{Z\mathbb{}}^{\mathscr{F}^2}$ with the two-element family $\mathscr{F}^2$ of inductive nonempty subsets of $\omega$. In particular we show that every injective endomorphism $\mathfrak{e}$ of $\boldsymbol{B}_{Z\mathbb{}}^{\mathscr{F}^2}$ is presented in the form $\mathfrak{e}=\mathfrak{e}_0\mathfrak{a}$, where $\mathfrak{e}_0$ is an injective  $(0,0,[0))$-endomorphism of $\boldsymbol{B}_{\mathbb{Z}}^{\mathscr{F}^2}$ and $\mathfrak{a}$ is an automorphism $\mathfrak{a}$ of $\boldsymbol{B}_{\mathbb{Z}}^{\mathscr{F}^2}$. Also we describe all injective  $(0,0,[0))$-endomorphisms $\mathfrak{e}_0$ of $\boldsymbol{B}_{\mathbb{Z}}^{\mathscr{F}^2}$, i.e., such that $(0,0,[0))\mathfrak{e}_0=(0,0,[0))$.
\end{abstract}

\maketitle

%\tableofcontents

\section{Introduction, motivation and main definitions}

We shall follow the terminology of~\cite{Clifford-Preston-1961, Clifford-Preston-1967, Lawson=1998}. By $\omega$ we denote the set of all non-negative integers and by $\mathbb{Z}$ the set of all integers.

A subset $A$ of $\omega$ is said to be \emph{inductive}, if $i\in A$ implies $i+1\in A$. Obvious, that $\varnothing$ is an inductive subset of $\omega$.

\begin{remark}[\!\cite{Gutik-Mykhalenych=2021}]\label{remark-1.1}
\begin{enumerate}
  \item\label{remark-1.1(1)} By Lemma~6 from \cite{Gutik-Mykhalenych=2020} a nonempty subset $F\subseteq \omega$ is inductive in $\omega$ if and only $(-1+F)\cap F=F$.
  \item\label{remark-1.1(2)} Since the set $\omega$ with the usual order is well-ordered, for any nonempty inductive subset $F$ in $\omega$ there exists nonnegative integer $n_F\in\omega$ such that $[n_F)=F$.
  \item\label{remark-1.1(3)} Statement \eqref{remark-1.1(2)} implies that the intersection of an arbitrary finite family of nonempty inductive subsets in $\omega$ is a nonempty inductive subset of  $\omega$.
\end{enumerate}
\end{remark}

Let $\mathscr{P}(\omega)$ be  the family of all subsets of $\omega$. For any $F\in\mathscr{P}(\omega)$ and $n,m\in\omega$ we put $n-m+F=\{n-m+k\colon k\in F\}$ if $F\neq\varnothing$ and $n-m+\varnothing=\varnothing$. A subfamily $\mathscr{F}\subseteq\mathscr{P}(\omega)$ is called \emph{${\omega}$-closed} if $F_1\cap(-n+F_2)\in\mathscr{F}$ for all $n\in\omega$ and $F_1,F_2\in\mathscr{F}$. For any $a\in\omega$ we denote $[a)=\{x\in\omega\colon x\geqslant a\}$.

A semigroup $S$ is called {\it inverse} if for any
element $x\in S$ there exists a unique $x^{-1}\in S$ such that
$xx^{-1}x=x$ and $x^{-1}xx^{-1}=x^{-1}$. The element $x^{-1}$ is
called the {\it inverse of} $x\in S$. If $S$ is an inverse
semigroup, then the function $\operatorname{inv}\colon S\to S$
which assigns to every element $x$ of $S$ its inverse element
$x^{-1}$ is called the {\it inversion}.

%A \emph{partially ordered set} (or shortly a \emph{poset}) $(X,\leqq)$ is the set $X$ with the reflexive, antisymmetric and transitive relation $\leqq$. In this case relation $\leqq$ is called a partial order on $X$. A partially ordered set $(X,\leqq)$ is \emph{linearly ordered} or is a \emph{chain} if $x\leqq y$ or $y\leqq x$ for any $x,y\in X$. A map $f$ from a poset $(X,\leqq)$ onto a poset $(Y,\eqslantless)$ is said to be an order isomorphism if $f$ is bijective and $x\leqq y$ if and only if $f(x)\eqslantless f(y)$.

If $S$ is a semigroup, then we shall denote the subset of all
idempotents in $S$ by $E(S)$. If $S$ is an inverse semigroup, then
$E(S)$ is closed under multiplication and we shall refer to $E(S)$ as a
\emph{band} (or the \emph{band of} $S$). Then the semigroup
operation on $S$ determines the following partial order $\preccurlyeq$
on $E(S)$: $e\preccurlyeq f$ if and only if $ef=fe=e$. This order is
called the {\em natural partial order} on $E(S)$. A \emph{semilattice} is a commutative semigroup of idempotents.

If $S$ is an inverse semigroup then the semigroup operation on $S$ determines the following partial order $\preccurlyeq$
on $S$: $s\preccurlyeq t$ if and only if there exists $e\in E(S)$ such that $s=te$. This order is
called the {\em natural partial order} on $S$ \cite{Wagner-1952}.

The \emph{bicyclic monoid} or the \emph{bicyclic semigroup} ${\mathscr{C}}(p,q)$ is the semigroup with the identity $1$ generated by two elements $p$ and $q$ subjected only to the condition $pq=1$. The semigroup operation on ${\mathscr{C}}(p,q)$ is determined as
follows:
\begin{equation*}
    q^kp^l\cdot q^mp^n=q^{k+m-\min\{l,m\}}p^{l+n-\min\{l,m\}}.
\end{equation*}
It is well known that the bicyclic monoid ${\mathscr{C}}(p,q)$ is a bisimple (and hence simple) combinatorial $E$-unitary inverse semigroup and every non-trivial congruence on ${\mathscr{C}}(p,q)$ is a group congruence \cite{Clifford-Preston-1961}.

On the set $\boldsymbol{B}_{\omega}=\omega\times\omega$ we define the semigroup operation ``$\cdot$'' in the following way
\begin{equation}\label{eq-1.1}
  (i_1,j_1)\cdot(i_2,j_2)=
  \left\{
    \begin{array}{ll}
      (i_1-j_1+i_2,j_2), & \hbox{if~} j_1\leqslant i_2;\\
      (i_1,j_1-i_2+j_2), & \hbox{if~} j_1\geqslant i_2.
    \end{array}
  \right.
\end{equation}
It is well known that the bicyclic monoid $\mathscr{C}(p,q)$ to the semigroup $\boldsymbol{B}_{\omega}$ is isomorphic by the mapping $\mathfrak{h}\colon \mathscr{C}(p,q)\to \boldsymbol{B}_{\omega}$, $q^kp^l\mapsto (k,l)$ (see: \cite[Section~1.12]{Clifford-Preston-1961} or \cite[Exercise IV.1.11$(ii)$]{Petrich-1984}).

%A {\it topological} ({\it semitopological}) {\it semigroup} is a topological space together with a continuous (separately continuous) semigroup operation. If $S$ is a~semigroup and $\tau$ is a topology on $S$ such that $(S,\tau)$ is a topological semigroup, then we shall call $\tau$ a \emph{semigroup} \emph{topology} on $S$, and if $\tau$ is a topology on $S$ such that $(S,\tau)$ is a semitopological semigroup, then we shall call $\tau$ a \emph{shift-continuous} \emph{topology} on~$S$. An inverse topological semigroup with the continuous inversion is called a \emph{topological inverse semigroup}. If $S$ is an~inverse semigroup and $\tau$ is a topology on $S$ such that $(S,\tau)$ is a topological inverse semigroup, then we shall call $\tau$ a \emph{semigroup inverse topology} on $S$.

Next we shall describe the construction which is introduced in \cite{Gutik-Mykhalenych=2020}.

Let $\boldsymbol{B}_{\omega}$ be the bicyclic monoid and $\mathscr{F}$ be an ${\omega}$-closed subfamily of $\mathscr{P}(\omega)$. On the set $\boldsymbol{B}_{\omega}\times\mathscr{F}$ we define the semigroup operation ``$\cdot$'' in the following way
\begin{equation}\label{eq-1.2}
  (i_1,j_1,F_1)\cdot(i_2,j_2,F_2)=
  \left\{
    \begin{array}{ll}
      (i_1-j_1+i_2,j_2,(j_1-i_2+F_1)\cap F_2), & \hbox{if~} j_1\leqslant i_2;\\
      (i_1,j_1-i_2+j_2,F_1\cap (i_2-j_1+F_2)), & \hbox{if~} j_1\geqslant i_2.
    \end{array}
  \right.
\end{equation}
In \cite{Gutik-Mykhalenych=2020} is proved that if the family $\mathscr{F}\subseteq\mathscr{P}(\omega)$ is ${\omega}$-closed then $(\boldsymbol{B}_{\omega}\times\mathscr{F},\cdot)$ is a semigroup. Moreover, if an ${\omega}$-closed family  $\mathscr{F}\subseteq\mathscr{P}(\omega)$ contains the empty set $\varnothing$ then the set
$ %\begin{equation*}
  \boldsymbol{I}=\{(i,j,\varnothing)\colon i,j\in\omega\}
$ %\end{equation*}
is an ideal of the semigroup $(\boldsymbol{B}_{\omega}\times\mathscr{F},\cdot)$. For any ${\omega}$-closed family $\mathscr{F}\subseteq\mathscr{P}(\omega)$ the following semigroup
\begin{equation*}
  \boldsymbol{B}_{\omega}^{\mathscr{F}}=
\left\{
  \begin{array}{ll}
    (\boldsymbol{B}_{\omega}\times\mathscr{F},\cdot)/\boldsymbol{I}, & \hbox{if~} \varnothing\in\mathscr{F};\\
    (\boldsymbol{B}_{\omega}\times\mathscr{F},\cdot), & \hbox{if~} \varnothing\notin\mathscr{F}
  \end{array}
\right.
\end{equation*}
is defined in \cite{Gutik-Mykhalenych=2020}. The semigroup $\boldsymbol{B}_{\omega}^{\mathscr{F}}$ generalizes the bicyclic monoid and the countable semigroup of matrix units. It is proven in \cite{Gutik-Mykhalenych=2020} that $\boldsymbol{B}_{\omega}^{\mathscr{F}}$ is a combinatorial inverse semigroup and Green's relations, the natural partial order on $\boldsymbol{B}_{\omega}^{\mathscr{F}}$ and its set of idempotents are described.
Here, the criteria when the semigroup $\boldsymbol{B}_{\omega}^{\mathscr{F}}$ is simple, $0$-simple, bisimple, $0$-bisimple, or it has the identity, are given.
In particularly in \cite{Gutik-Mykhalenych=2020} it is proved that the semigroup $\boldsymbol{B}_{\omega}^{\mathscr{F}}$ is isomorphic to the semigrpoup of ${\omega}{\times}{\omega}$-matrix units if and only if $\mathscr{F}$ consists of a singleton set and the empty set, and $\boldsymbol{B}_{\omega}^{\mathscr{F}}$ is isomorphic to the bicyclic monoid if and only if $\mathscr{F}$ consists of a non-empty inductive subset of $\omega$.

Group congruences on the semigroup  $\boldsymbol{B}_{\omega}^{\mathscr{F}}$ and its homomorphic retracts  in the case when an ${\omega}$-closed family $\mathscr{F}$ consists of inductive non-empty subsets of $\omega$ are studied in \cite{Gutik-Mykhalenych=2021}. It is proven that a congruence $\mathfrak{C}$ on $\boldsymbol{B}_{\omega}^{\mathscr{F}}$ is a group congruence if and only if its restriction on a subsemigroup of $\boldsymbol{B}_{\omega}^{\mathscr{F}}$, which is isomorphic to the bicyclic semigroup, is not the identity relation. Also in \cite{Gutik-Mykhalenych=2021}, all non-trivial homomorphic retracts and isomorphisms  of the semigroup $\boldsymbol{B}_{\omega}^{\mathscr{F}}$ are described.

In \cite{Gutik-Lysetska=2021, Lysetska=2020} the algebraic structure of the semigroup $\boldsymbol{B}_{\omega}^{\mathscr{F}}$ is established in the case when ${\omega}$-closed family $\mathscr{F}$ consists of atomic subsets of ${\omega}$.

The set $\boldsymbol{B}_{\mathbb{Z}}=\mathbb{Z}\times\mathbb{Z}$ with the semigroup operation defined by formula \eqref{eq-1.1} is called the \emph{extended bicyclic semigroup} \cite{Warne-1968}. On the set $\boldsymbol{B}_{\mathbb{Z}}\times\mathscr{F}$, where $\mathscr{F}$ is an ${\omega}$-closed subfamily of $\mathscr{P}(\omega)$, we define the semigroup operation ``$\cdot$'' by formula \eqref{eq-1.2}. In \cite{Gutik-Pozdniakova=2021} it is proved that $(\boldsymbol{B}_{\mathbb{Z}}\times\mathscr{F},\cdot)$ is a semigroup. Moreover, if an ${\omega}$-closed family  $\mathscr{F}\subseteq\mathscr{P}(\omega)$ contains the empty set $\varnothing$ then the set
$ %\begin{equation*}
  \boldsymbol{I}=\{(i,j,\varnothing)\colon i,j\in\mathbb{Z}\}
$ %\end{equation*}
is an ideal of the semigroup $(\boldsymbol{B}_{\mathbb{Z}}\times\mathscr{F},\cdot)$. For any ${\omega}$-closed family $\mathscr{F}\subseteq\mathscr{P}(\omega)$ the following semigroup
\begin{equation*}
  \boldsymbol{B}_{\mathbb{Z}}^{\mathscr{F}}=
\left\{
  \begin{array}{ll}
    (\boldsymbol{B}_{\mathbb{Z}}\times\mathscr{F},\cdot)/\boldsymbol{I}, & \hbox{if~} \varnothing\in\mathscr{F};\\
    (\boldsymbol{B}_{\mathbb{Z}}\times\mathscr{F},\cdot), & \hbox{if~} \varnothing\notin\mathscr{F}
  \end{array}
\right.
\end{equation*}
is defined in \cite{Gutik-Pozdniakova=2021} similarly as in \cite{Gutik-Mykhalenych=2020}. In \cite{Gutik-Pozdniakova=2021} it is proven that $\boldsymbol{B}_{\mathbb{Z}}^{\mathscr{F}}$ is a combinatorial inverse semigroup. Green's relations, the natural partial order on the semigroup $\boldsymbol{B}_{\mathbb{Z}}^{\mathscr{F}}$ and its set of idempotents are described.
Here, the criteria when the semigroup $\boldsymbol{B}_{\mathbb{Z}}^{\mathscr{F}}$ is simple, $0$-simple, bisimple, $0$-bisimple, is isomorphic to the extended bicyclic semigroup,  are derived. In particularly in \cite{Gutik-Pozdniakova=2021} it is proved that the semigroup $\boldsymbol{B}_{\mathbb{Z}}^{\mathscr{F}}$ is isomorphic to the semigrpoup of ${\omega}{\times}{\omega}$-matrix units if and only if $\mathscr{F}$ consists of a singleton set and the empty set, and $\boldsymbol{B}_{Z}^{\mathscr{F}}$ is isomorphic to the extended bicyclic semigroup if and only if $\mathscr{F}$ consists of a non-empty inductive subset of $\omega$.
%Here the criteria of simplicity, $0$-simplicity, bisimplicity, $0$-bisimplicity of the semigroup $\boldsymbol{B}_{\mathbb{Z}}^{\mathscr{F}}$ and the criterion for $\boldsymbol{B}_{\mathbb{Z}}^{\mathscr{F}}$ to be isomorphic to the extended bicyclic semigroup or the countable semigroup of matrix units are derived.
Also, in \cite{Gutik-Pozdniakova=2021} it is proved that in the case when the family $\mathscr{F}$ consists of all singletons of $\omega$ and the empty set, the semigroup $\boldsymbol{B}_{\mathbb{Z}}^{\mathscr{F}}$ is isomorphic to the Brandt $\lambda$-extension of the semilattice $(\omega,\min)$, where $(\omega,\min)$ is the set $\omega$ with the semilattice operation $x\cdot y=\min\{x,y\}$.

It is well-known that every automorphism of the bicyclic monoid $\boldsymbol{B}_{\omega}$  is the identity self-map of $\boldsymbol{B}_{\omega}$ \cite{Clifford-Preston-1961}, and hence the group $\mathbf{Aut}(\boldsymbol{B}_{\omega})$ of automorphisms of $\boldsymbol{B}_{\omega}$ is trivial. The group $\mathbf{Aut}(\boldsymbol{B}_{\mathbb{Z}})$ of automorphisms of the extended bicyclic semigroup $\boldsymbol{B}_{\mathbb{Z}}$ is established in \cite{Gutik-Maksymyk-2017} and there it is proved that $\mathbf{Aut}(\boldsymbol{B}_{\mathbb{Z}})$ is isomorphic to the additive group of integers $\mathbb{Z}(+)$. In the paper \cite{Gutik-Pozdniakova=2023} we prove that for any family $\mathscr{F}$ of nonempty inductive subsets of $\omega$ the group $\mathbf{Aut}(\boldsymbol{B}_{\mathbb{Z}}^{\mathscr{F}})$ of automorphisms of the semigroup $\boldsymbol{B}_{\mathbb{Z}}^{\mathscr{F}}$ is isomorphic to the additive group of integers.

In \cite{Gutik-Prokhorenkova-Sekh=2021} the semigroups of endomorphisms of the bicyclic  semigroup and  the extended bicyclic semigroup are described.
All types of monoid endomorphisms of the monoid $\boldsymbol{B}_{\omega}^{\mathscr{F}^2}$ for two-element family $\mathscr{F}^2$ of nonempty inductive subsets of $\omega$ are described in \cite{Gutik-Pozdniakova=2022, Gutik-Pozdniakova=2023a, Gutik-Pozdniakova=2024}.

This paper is a continuation of \cite{Gutik-Pozdniakova=2021, Gutik-Pozdniakova=2023}. We describe injective endomorphisms of the semigroup $\boldsymbol{B}_{Z\mathbb{}}^{\mathscr{F}^2}$ with the two-element family $\mathscr{F}^2$ of inductive nonempty subsets of $\omega$. In particular we show that every injective endomorphism $\mathfrak{e}$ of $\boldsymbol{B}_{Z\mathbb{}}^{\mathscr{F}^2}$ is presented in the form $\mathfrak{e}=\mathfrak{e}_0\mathfrak{a}$, where $\mathfrak{e}_0$ is an injective  $(0,0,[0))$-endomorphism of $\boldsymbol{B}_{\mathbb{Z}}^{\mathscr{F}^2}$ and $\mathfrak{a}$ is an automorphism $\mathfrak{a}$ of $\boldsymbol{B}_{\mathbb{Z}}^{\mathscr{F}^2}$. Also we describe all injective  $(0,0,[0))$-endomorphisms $\mathfrak{e}_0$ of $\boldsymbol{B}_{\mathbb{Z}}^{\mathscr{F}^2}$, i.e., such that $(0,0,[0))\mathfrak{e}_0=(0,0,[0))$.

Later we assume that an ${\omega}$-closed family $\mathscr{F}^2$ consists of two inductive nonempty subsets of $\omega$.

\section{Endomorphisms of the semigroup $\boldsymbol{B}_{Z\mathbb{}}^{\mathscr{F}^2}$ with the fixed point $(0,0,[0))$}\label{section-2}

If $\mathscr{F}$ is an arbitrary $\omega$-closed family of inductive subsets in $\mathscr{P}(\omega)$ and $[s)\in\mathscr{F}$
for some $s\in\mathbb{Z}$ then
\begin{equation*}
  \boldsymbol{B}_{Z\mathbb{}}^{\{[s)\}}=\{(i,j,[s))\colon i,j\in\mathbb{Z}\}
\end{equation*}
is a subsemigroup of $\boldsymbol{B}_{\mathbb{Z}}^{\mathscr{F}}$  and by Proposition 5 of \cite{Gutik-Pozdniakova=2021} the semigroup $\boldsymbol{B}_{Z\mathbb{}}^{\{[s)\}}$ is isomorphic to the extended bicyclic semigroup.

\begin{remark}\label{remark-2.1}
By Proposition 1 of \cite{Gutik-Pozdniakova=2023} for any $\omega$-closed family $\mathscr{F}$ of inductive subsets in $\mathscr{P}(\omega)$ there exists an $\omega$-closed family $\mathscr{F}^*$ of inductive subsets in $\mathscr{P}(\omega)$ such that $[0)\in \mathscr{F}^*$ and the semigroups $\boldsymbol{B}_{\mathbb{Z}}^{\mathscr{F}}$ and $\boldsymbol{B}_{\mathbb{Z}}^{\mathscr{F}^*}$ are isomorphic. Hence without loss of generality we may assume that the family $\mathscr{F}^2$ contains the set $[0)$.
\end{remark}

An endomorphism $\mathfrak{e}$ of the semigroup $\boldsymbol{B}_{\mathbb{Z}}^{\mathscr{F}^2}$ is called \emph{$(0,0,[0))$-endomorphism} if $(0,0,[0))\mathfrak{e}=(0,0,[0))$.

\begin{remark}\label{remark-2.2}
Theorem~1 of \cite{Gutik-Pozdniakova=2022} state that for every injective monoid endomorphism $\mathfrak{e}$ of the monoid $\boldsymbol{B}_{\omega}^{\mathscr{F}^2}$ only one of the following conditions holds:
\begin{enumerate}
  \item there exist a positive integer $k$ and $p\in\{0,\ldots,k-1\}$ such that $\mathfrak{e}=\alpha_{k,p}$, where the mapping $\alpha_{k,p}$ defined by the formula
      \begin{align*}
        (i,j,[0))\alpha_{k,p}&=(ki,kj,[0)), \\
        (i,j,[1))\alpha_{k,p}&=(p+ki,p+kj,[1)),
      \end{align*}
      $i,j\in\omega$;
  \item there exist a positive integer $k\geqslant 2$ and $p\in\{1,\ldots,k-1\}$ such that $\mathfrak{e}=\beta_{k,p}$, where the mapping $\beta_{k,p}$ defined by the formula
      \begin{align*}
        (i,j,[0))\beta_{k,p}&=(ki,kj,[0)), \\
        (i,j,[1))\beta_{k,p}&=(p+ki,p+kj,[0)),
      \end{align*}
      $i,j\in\omega$.
\end{enumerate}
\end{remark}

For any integer $k$ we define
\begin{equation*}
  \boldsymbol{B}_{\mathbb{Z}}^{\mathscr{F}^2}(k,k,0)=(k, k, [0))\cdot  \boldsymbol{B}_{\mathbb{Z}}^{\mathscr{F}^2}\cdot(k, k, [0)).
\end{equation*}
By Proposition 2~\cite{Gutik-Pozdniakova=2023},  $\boldsymbol{B}_{\mathbb{Z}}^{\mathscr{F}^2}(k,k,0)$ is a subsemigroup of $\boldsymbol{B}_{\mathbb{Z}}^{\mathscr{F}^2}$ which is isomorphic to $\boldsymbol{B}_{\omega}^{\mathscr{F}^2}$.

Fix an arbitrary positive integer $k$ and any $p\in\{0,\ldots,k-1\}$. For all $i,j\in\mathbb{Z}$ we denote the transformation $\alpha_{k,p}$ of the semigroup $\boldsymbol{B}_{\mathbb{Z}}^{\mathscr{F}^2}$ in the following way
\begin{align*}
        (i,j,[0))\alpha_{k,p}&=(ki,kj,[0)), \\
        (i,j,[1))\alpha_{k,p}&=(p+ki,p+kj,[1)),
      \end{align*}

\begin{lemma}\label{lemma-2.3}
For an arbitrary positive integer $k$ and any $p\in\{0,\ldots,k-1\}$ the map $\alpha_{k,p}$ is an injective endomorphism of the semigroup $\boldsymbol{B}_{\mathbb{Z}}^{\mathscr{F}^2}$.
\end{lemma}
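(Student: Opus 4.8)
The plan is to check directly that $\alpha_{k,p}$ is a homomorphism and then to observe that injectivity comes almost for free. Since $\mathscr{F}^2$ consists of nonempty inductive subsets there is no zero element to worry about, and (after Remark~\ref{remark-2.1} we take $\mathscr{F}^2=\{[0),[1)\}$) a typical element is $(i,j,F)$ with $i,j\in\mathbb{Z}$ and $F\in\{[0),[1)\}$; note first that $\alpha_{k,p}$ does map $\boldsymbol{B}_{\mathbb{Z}}^{\mathscr{F}^2}$ into itself because it preserves the third coordinate. For the homomorphism identity I would fix $a=(i_1,j_1,F_1)$ and $b=(i_2,j_2,F_2)$ and split into the four cases $(F_1,F_2)\in\{[0),[1)\}^2$, and within each into the two subcases $j_1\leqslant i_2$ and $j_1\geqslant i_2$ of formula~\eqref{eq-1.2}.

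The guiding observation is that $\alpha_{k,p}$ multiplies the first two coordinates by $k$ and adds $p$ precisely when the third coordinate is $[1)$. In the diagonal cases $(F_1,F_2)=([0),[0))$ and $([1),[1))$ the shift by $0$ (resp.\ by $p$) occurs on both relevant coordinates, hence cancels in the first-coordinate computation, and since $k>0$ the inequality $j_1\leqslant i_2$ is equivalent to the corresponding inequality between the transformed coordinates; so both sides of the identity collapse to the same expression after routine arithmetic. In the mixed cases $(F_1,F_2)=([0),[1))$ and $([1),[0))$ the point that needs attention is that $\alpha_{k,p}$ adds $p$ to only one of the two coordinates being compared, so in principle the active branch of~\eqref{eq-1.2} might change; here one uses the hypothesis $0\leqslant p\leqslant k-1$, which gives that $j_1\geqslant i_2+1$ forces $kj_1\geqslant ki_2+k>ki_2+p$ while $j_1\leqslant i_2$ forces $kj_1\leqslant ki_2\leqslant ki_2+p$, so the branch is in fact preserved; the boundary $j_1=i_2$ is harmless because the two lines of~\eqref{eq-1.2} agree there.

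For the third coordinates I would use that every set $n-m+F$ occurring in the computation is again of the form $[t)$ with $t\in\mathbb{Z}$, and that an intersection $[t_1)\cap[t_2)$ formed inside $\omega$ equals $[\max\{t_1,t_2,0\})$; the required equalities of third coordinates then reduce to checking that rescaling the indices by $k$ and translating by $p$ with $0\leqslant p\leqslant k-1$ does not change which term attains the maximum, which is immediate from the sign conditions already used for the first coordinates. Finally, for injectivity: if $(i_1,j_1,F_1)\alpha_{k,p}=(i_2,j_2,F_2)\alpha_{k,p}$, then comparing third coordinates forces $F_1=F_2$ (since $[0)\neq[1)$ and $\alpha_{k,p}$ fixes the third coordinate), after which comparing the first two coordinates and cancelling $k\neq 0$ yields $i_1=i_2$ and $j_1=j_2$.

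I expect the only real obstacle to be organisational: in the mixed cases the translation by $p$ can move a product across the ``$j_1=i_2$'' seam of~\eqref{eq-1.2}, and one must verify that $p\leqslant k-1$ is exactly what keeps both presentations of the product in agreement. One could alternatively shorten the diagonal cases by invoking that $(i,j)\mapsto(ki,kj)$ and $(i,j)\mapsto(p+ki,p+kj)$ are endomorphisms of the extended bicyclic semigroup $\boldsymbol{B}_{\mathbb{Z}}$, leaving only the mixed cases to be treated by hand.
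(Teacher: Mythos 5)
Your proposal is correct and follows essentially the same route as the paper: the diagonal cases are reduced (or reducible) to the known endomorphisms of the extended bicyclic semigroup, while the mixed cases $([0),[1))$ and $([1),[0))$ are settled by the same observation that $0\leqslant p\leqslant k-1$ keeps the active branch of~\eqref{eq-1.2} (up to the harmless seam $j_1=i_2$) and by the identification $[t_1)\cap[t_2)=[\max\{t_1,t_2,0\})$ inside $\omega$. Your explicit injectivity argument (third coordinate preserved, then cancel $k$) is a small addition that the paper leaves implicit, not a different method.
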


\begin{proof}
 By by Proposition 5 of \cite{Gutik-Pozdniakova=2021} the subsemigroups $\boldsymbol{B}_{\mathbb{Z}}^{\{[0)\}}$ and $\boldsymbol{B}_{\mathbb{Z}}^{\{[1)\}}$ are isomorphic to the extended bicyclic semigroup. By Proposition of \cite{Gutik-Prokhorenkova-Sekh=2021} we have that the restrictions of $\alpha_{k,p}$ onto the subsemigroups $\boldsymbol{B}_{\mathbb{Z}}^{\{[0)\}}$ and $\boldsymbol{B}_{\mathbb{Z}}^{\{[1)\}}$ are endomorphisms of $\boldsymbol{B}_{\mathbb{Z}}^{\{[0)\}}$ and $\boldsymbol{B}_{\mathbb{Z}}^{\{[1)\}}$, respectively. This implies that for all $i,j,s,t\in \mathbb{Z}$ the following equalities hold
\begin{align*}
  ((i,j,[0))\cdot(s,t,[0)))\alpha_{k,p}&=(i,j,[0))\alpha_{k,p}\cdot (s,t,[0))\alpha_{k,p}, \\
  ((i,j,[1))\cdot(s,t,[1)))\alpha_{k,p}&=(i,j,[1))\alpha_{k,p}\cdot (s,t,[1))\alpha_{k,p}.
\end{align*}

For any $i,j,p,q\in \mathbb{Z}$ we have that
\begin{align*}
  ((i,j,[0))\cdot(s,t,[1)))\alpha_{k,p}&=
  \left\{
    \begin{array}{ll}
      (i+s-j,t,(j-s+[0))\cap[1))\alpha_{k,p}, & \hbox{if~} j<s;\\
      (i,t,[0)\cap[1))\alpha_{k,p},           & \hbox{if~} j=s;\\
      (i,j+t-s,[0)\cap(s-j+[1)))\alpha_{k,p}, & \hbox{if~} j>s
    \end{array}
  \right.
   =\\
   &=
   \left\{
    \begin{array}{ll}
      (i+s-j,t,[1))\alpha_{k,p}, & \hbox{if~} j<s;\\
      (i,t,[1))\alpha_{k,p},     & \hbox{if~} j=s;\\
      (i,j+t-s,[0))\alpha_{k,p}, & \hbox{if~} j>s
    \end{array}
  \right.
   =\\
   &=
   \left\{
    \begin{array}{ll}
      (p+k(i+s-j),p+kt,[1)), & \hbox{if~} j<s;\\
      (p+ki,p+kt,[1)),       & \hbox{if~} j=s;\\
      (ki,k(j+t-s),[0)),     & \hbox{if~} j>s,
    \end{array}
  \right.
\end{align*}
\begin{align*}
  (i,j,[0))&\alpha_{k,p}\cdot (s,t,[1))\alpha_{k,p}=(ki,kj,[0))\cdot(p+ks,p+kt,[1))= \\
   &=
   \left\{
     \begin{array}{ll}
       (ki+p+ks-kj,p+kt,(kj-p-ks+[0))\cap[1)), & \hbox{if~} kj<p+ks;\\
       (ki,p+kt,[0)\cap[1)),                   & \hbox{if~} kj=p+ks;\\
       (ki,kj+p+kt-p-ks,[0)\cap(p+ks-kj+[1))), & \hbox{if~} kj>p+ks
     \end{array}
   \right.
   = \\
   &=
   \left\{
     \begin{array}{ll}
       (p+k(i+s-j),p+kt,[1)),              & \hbox{if~} kj<p+ks;\\
       (ki,p+kt,[1)),                      & \hbox{if~} kj=p+ks;\\
       (ki,k(j+t-s),[0)), & \hbox{if~} kj>p+ks
     \end{array}
   \right.
   = \\
   &=
   \left\{
     \begin{array}{ll}
       (p+k(i+s-j),p+kt,[1)), & \hbox{if~} kj<ks;\\
       (p+ki,p+kt,[1)),       & \hbox{if~} kj=ks;\\
       (ki,kt,[1)),                   & \hbox{if~} kj=p+ks \hbox{~and~} p=0;\\
       \texttt{vagueness},                   & \hbox{if~} kj=p+ks \hbox{~and~} p\neq0;\\
       (ki,k(j+t-s),[0)), & \hbox{if~} kj>ks
     \end{array}
   \right.
   = \\
   &=
   \left\{
    \begin{array}{ll}
      (p+k(i+s-j),p+kt,[1)), & \hbox{if~} j<s;\\
      (p+ki,p+kt,[1)),       & \hbox{if~} j=s;\\
      (ki,k(j+t-s),[0)),     & \hbox{if~} j>s,
    \end{array}
  \right.
\end{align*}
and
\begin{align*}
  ((i,j,[1))\cdot(s,t,[0)))\alpha_{k,p}&=
    \left\{
     \begin{array}{ll}
       (i+s-j,t,(j-s+[1))\cap[0))\alpha_{k,p}, & \hbox{if~} j<s;\\
       (i,t,[1)\cap[0))\alpha_{k,p},           & \hbox{if~} j=s;\\
       (i,j+t-s,[1)\cap(s-j+[0)))\alpha_{k,p}, & \hbox{if~} j>s
     \end{array}
   \right.
   = 
\end{align*}%\\
\begin{align*}   
   &=
   \left\{
     \begin{array}{ll}
       (i+s-j,t,[0))\alpha_{k,p}, & \hbox{if~} j<s;\\
       (i,t,[1))\alpha_{k,p},     & \hbox{if~} j=s;\\
       (i,j+t-s,[1))\alpha_{k,p}, & \hbox{if~} j>s
     \end{array}
   \right.
   = \\
   &=
   \left\{
     \begin{array}{ll}
       (k(i+s-j),kt,[0)),     & \hbox{if~} j<s;\\
       (p+ki,p+kt,[1)),       & \hbox{if~} j=s;\\
       (p+ki,p+k(j+t-s),[1)), & \hbox{if~} j>s,
     \end{array}
   \right.
\end{align*}
\begin{align*}
  (i,j,[1))&\alpha_{k,p}\cdot (s,t,[0))\alpha_{k,p}=(p+ki,p+kj,[1))\cdot(ks,kt,[0))= \\
   &=
   \left\{
     \begin{array}{ll}
       (p+ki+ks-p-kj,kt,(p+kj-ks+[1))\cap[0)), & \hbox{if~} p+kj<ks; \\
       (p+ki,kt,[1)\cap[0)),                   & \hbox{if~} p+kj=ks; \\
       (p+ki,p+kj+kt-ks,[1)\cap(ks-p-kj+[0))), & \hbox{if~} p+kj>ks
     \end{array}
   \right.
   = \\
   &=
   \left\{
     \begin{array}{ll}
       (k(i+s-j),kt,[0)),     & \hbox{if~} p+kj<ks; \\
       (p+ki,kt,[1)),         & \hbox{if~} p+kj=ks; \\
       (p+ki,p+k(j+t-s),[1)), & \hbox{if~} p+kj>ks
     \end{array}
   \right.
   = \\
   &=
   \left\{
     \begin{array}{ll}
       (k(i+s-j),kt,[0)),     & \hbox{if~} kj<ks; \\
       (ki,kt,[1)),           & \hbox{if~} p+kj=ks \hbox{~and~} p=0; \\
       \texttt{vagueness},    & \hbox{if~} p+kj=ks \hbox{~and~} p\neq0; \\
       (p+ki,p+kt,[1)),       & \hbox{if~} kj=ks; \\
       (p+ki,p+k(j+t-s),[1)), & \hbox{if~} kj>ks
     \end{array}
   \right.
   = \\
   &=
   \left\{
     \begin{array}{ll}
       (k(i+s-j),kt,[0)),     & \hbox{if~} j<s;\\
       (p+ki,p+kt,[1)),       & \hbox{if~} j=s;\\
       (p+ki,p+k(j+t-s),[1)), & \hbox{if~} j>s,
     \end{array}
   \right.
\end{align*}
because $p\in\{0,\ldots,k-1\}$. Thus, $\alpha_{k,p}$ is an endomorphism of the semigroup $\boldsymbol{B}_{\mathbb{Z}}^{\mathscr{F}^2}$.
\end{proof}

Fix an arbitrary positive integer $k\geqslant 2$ and any $p\in\{1,\ldots,k-1\}$. For all $i,j\in\mathbb{Z}$ we define the transformation $\beta_{k,p}$  of the semigroup $\boldsymbol{B}_{\mathbb{Z}}^{\mathscr{F}^2}$ in the following way
\begin{align*}
  (i,j,[0))\beta_{k,p}&=(ki,kj,[0)), \\
  (i,j,[1))\beta_{k,p}&=(p+ki,p+kj,[0)).
\end{align*}
It is obvious that $\beta_{k,p}$ is an injective transformation of the semigroup $\boldsymbol{B}_{\mathbb{Z}}^{\mathscr{F}^2}$.

\begin{lemma}\label{lemma-2.4}
For an arbitrary positive integer $k\geqslant 2$ and any $p\in\{1,\ldots,k-1\}$ the map $\beta_{k,p}$ is an injective endomorphism of the semigroup $\boldsymbol{B}_{\mathbb{Z}}^{\mathscr{F}^2}$.
\end{lemma}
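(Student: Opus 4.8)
The plan is to mimic exactly the bookkeeping carried out in the proof of Lemma~\ref{lemma-2.3}. Injectivity has already been observed, so only the homomorphism property is at stake. As a first step, I would reduce the verification to the three ``mixed'' cases. Namely, by Proposition~5 of \cite{Gutik-Pozdniakova=2021} both $\boldsymbol{B}_{\mathbb{Z}}^{\{[0)\}}$ and $\boldsymbol{B}_{\mathbb{Z}}^{\{[1)\}}$ are isomorphic to the extended bicyclic semigroup; the restriction of $\beta_{k,p}$ to $\boldsymbol{B}_{\mathbb{Z}}^{\{[0)\}}$ is the map $(i,j)\mapsto(ki,kj)$, which is an endomorphism of $\boldsymbol{B}_{\mathbb{Z}}$ by the Proposition of \cite{Gutik-Prokhorenkova-Sekh=2021}; and the restriction of $\beta_{k,p}$ to $\boldsymbol{B}_{\mathbb{Z}}^{\{[1)\}}$ is $(i,j,[1))\mapsto(p+ki,p+kj,[0))$, i.e.\ the composition of the shift $(i,j)\mapsto(p+i,p+j)$ with $(i,j)\mapsto(ki,kj)$ after erasing the label, which is again an endomorphism of the extended bicyclic semigroup landing in $\boldsymbol{B}_{\mathbb{Z}}^{\{[0)\}}$. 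Hence
\[
  ((i,j,[0))(s,t,[0)))\beta_{k,p}=(i,j,[0))\beta_{k,p}\cdot(s,t,[0))\beta_{k,p}
\]
and the analogous identity with both middle coordinates $[1)$ hold automatically.

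The second step is the explicit computation of both sides of
$((i,j,\varepsilon_1)\cdot(s,t,\varepsilon_2))\beta_{k,p}$ versus
$(i,j,\varepsilon_1)\beta_{k,p}\cdot(s,t,\varepsilon_2)\beta_{k,p}$ for the remaining two sign patterns $(\varepsilon_1,\varepsilon_2)=([0),[1))$ and $([1),[0))$, using formula~\eqref{eq-1.2}. On the left one first multiplies inside $\boldsymbol{B}_{\mathbb{Z}}^{\mathscr{F}^2}$, getting a case split according to whether $j<s$, $j=s$, or $j>s$, and noting that the third coordinate collapses to $[1)$ when $j\le s$ (since $[0)\cap[1)=[1)$, and more generally $(j-s+[0))\cap[1)=[1)$ for $j\le s$ and $(s-j+[1))\cap[0)=[1)$ — wait, one must be careful here, since $s-j+[1)$ may start below $0$; this is precisely where the inductiveness and Remark~\ref{remark-1.1} get used, as in the computation preceding this lemma). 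After applying $\beta_{k,p}$ one lands in $\boldsymbol{B}_{\mathbb{Z}}^{\{[0)\}}$ in every branch, with first coordinate $k(i+s-j)+($ possibly $p)$ and similarly for the other coordinate. On the right one multiplies $(ki,kj,[0))$ or $(p+ki,p+kj,[0))$ against $(p+ks,p+kt,[0))$ or $(ks,kt,[0))$ inside $\boldsymbol{B}_{\mathbb{Z}}^{\{[0)\}}$, with a case split now according to whether $kj$ (resp.\ $p+kj$) is $<$, $=$, or $>$ than $p+ks$ (resp.\ $ks$).

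The key point making the two sides agree is the arithmetic fact that, because $0\le p\le k-1$ (here even $1\le p\le k-1$), the strict inequality $j<s$ in $\mathbb{Z}$ is equivalent to $kj<ks$ which forces $kj\le p+ks$ with the boundary case $kj=p+ks$ occurring only in ways that the formula already handles, while $j=s$ gives $kj=ks\le p+ks$ and $j>s$ gives $kj\ge p+ks$; so no branch of the right-hand computation is ever "vague". Unlike in Lemma~\ref{lemma-2.3}, where the target label could be either $[0)$ or $[1)$ and the equality $kj=p+ks$ with $p\ne0$ produced a genuinely ambiguous entry that had to be argued away, here \emph{every} output carries the label $[0)$, so the label never causes an obstruction; the only thing to check is that the two integer-valued first and second coordinates coincide branch by branch, which is the same elementary inequality bookkeeping as before. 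I expect this last matching of cases — keeping straight which of $kj<p+ks$, $kj=p+ks$, $kj>p+ks$ corresponds to which of $j<s$, $j=s$, $j>s$, and confirming the coordinates agree — to be the only real labor, and it is routine. Concluding, $\beta_{k,p}$ respects the operation on all four sign patterns, hence is an endomorphism, and being injective it is an injective endomorphism of $\boldsymbol{B}_{\mathbb{Z}}^{\mathscr{F}^2}$. $\qed$
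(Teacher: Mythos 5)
Your proposal is correct and follows the paper's proof essentially verbatim: reduce the two unmixed products to the extended-bicyclic-semigroup endomorphism results (Proposition 5 of \cite{Gutik-Pozdniakova=2021} and Lemma 3 of \cite{Gutik-Prokhorenkova-Sekh=2021}), then verify the two mixed products directly from \eqref{eq-1.2}, using that $1\leqslant p\leqslant k-1$ makes $kj=p+ks$ (and $p+kj=ks$) impossible and aligns the branch $kj<p+ks$ with $j\leqslant s$ and $kj>p+ks$ with $j>s$, exactly as in the paper's case table. The only point to tidy from your aside: for $j>s$ the third coordinate of $(i,j,[0))\cdot(s,t,[1))$ is $[0)$ while that of $(i,j,[1))\cdot(s,t,[0))$ is $[1)$, and keeping these straight is what puts the summand $p$ into the correct branches after applying $\beta_{k,p}$ — routine bookkeeping, as you say.
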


\begin{proof}
By by Proposition 5 of \cite{Gutik-Pozdniakova=2021} the subsemigroups $\boldsymbol{B}_{\mathbb{Z}}^{\{[0)\}}$ and $\boldsymbol{B}_{\mathbb{Z}}^{\{[1)\}}$ are isomorphic to the extended bicyclic semigroup. By Lemma~3 of \cite{Gutik-Prokhorenkova-Sekh=2021}, the restriction of $\beta_{k,p}$ onto the subsemigroup $\boldsymbol{B}_{\mathbb{Z}}^{\{[0)\}}$ is an endomorphisms of $\boldsymbol{B}_{\mathbb{Z}}^{\{[0)\}}$, and the restriction of $\beta_{k,p}$ onto the subsemigroup $\boldsymbol{B}_{\mathbb{Z}}^{\{[1)\}}$ is a homomorphisms of $\boldsymbol{B}_{\mathbb{Z}}^{\{[1)\}}$ into $\boldsymbol{B}_{\mathbb{Z}}^{\{[0)\}}$. This implies that for all $i,j,s,t\in \mathbb{Z}$ the following equalities hold
\begin{align*}
  ((i,j,[0))\cdot(s,t,[0)))\beta_{k,p}&=(i,j,[0))\beta_{k,p}\cdot (s,t,[0))\beta_{k,p}, \\
  ((i,j,[1))\cdot(s,t,[1)))\beta_{k,p}&=(i,j,[1))\beta_{k,p}\cdot (s,t,[1))\beta_{k,p}.
\end{align*}

For any $i,j,p,q\in \mathbb{Z}$ we have that
\begin{align*}
  ((i,j,[0))\cdot(s,t,[1)))\beta_{k,p}&=
  \left\{
    \begin{array}{ll}
      (i+s-j,t,(j-s+[0))\cap[1))\beta_{k,p}, & \hbox{if~} j<s;\\
      (i,t,[0)\cap[1))\beta_{k,p},           & \hbox{if~} j=s;\\
      (i,j+t-s,[0)\cap(s-j+[1)))\beta_{k,p}, & \hbox{if~} j>s
    \end{array}
  \right.
   =
\end{align*}%\\
\begin{align*} 
   &=
   \left\{
    \begin{array}{ll}
      (i+s-j,t,[1))\beta_{k,p}, & \hbox{if~} j<s;\\
      (i,t,[1))\beta_{k,p},     & \hbox{if~} j=s;\\
      (i,j+t-s,[0))\beta_{k,p}, & \hbox{if~} j>s
    \end{array}
  \right.
   =\\
   &=
   \left\{
    \begin{array}{ll}
      (p+k(i+s-j),p+kt,[0)), & \hbox{if~} j<s;\\
      (p+ki,p+kt,[0)),       & \hbox{if~} j=s;\\
      (ki,k(j+t-s),[0)),     & \hbox{if~} j>s,
    \end{array}
  \right.
\end{align*}
\begin{align*}
  (i,j,[0))&\beta_{k,p}\cdot (s,t,[1))\beta_{k,p}=(ki,kj,[0))\cdot(p+ks,p+kt,[0))= \\
   &=
   \left\{
     \begin{array}{ll}
       (ki+p+ks-kj,p+kt,(kj-p-ks+[0))\cap[0)), & \hbox{if~} kj<p+ks;\\
       (ki,p+kt,[0)\cap[0)),                   & \hbox{if~} kj=p+ks;\\
       (ki,kj+p+kt-p-ks,[0)\cap(p+ks-kj+[0))), & \hbox{if~} kj>p+ks
     \end{array}
   \right.
   = \\
%\end{align*}
%\begin{align*}
   &=
   \left\{
     \begin{array}{ll}
       (p+k(i+s-j),p+kt,[0)),              & \hbox{if~} kj<p+ks;\\
       (ki,p+kt,[0)),                      & \hbox{if~} kj=p+ks;\\
       (ki,k(j+t-s),[0)),                  & \hbox{if~} kj>p+ks
     \end{array}
   \right.
   = \\
   &=
   \left\{
     \begin{array}{ll}
       (p+k(i+s-j),p+kt,[0)), & \hbox{if~} kj<ks;\\
       (p+ki,p+kt,[0)),       & \hbox{if~} kj=ks;\\
       (ki,kt,[0)),           & \hbox{if~} kj=p+ks \hbox{~and~} p=0;\\
       \texttt{vagueness},    & \hbox{if~} kj=p+ks \hbox{~and~} p\neq0;\\
       (ki,k(j+t-s),[0))),    & \hbox{if~} kj>ks
     \end{array}
   \right.
   = \\
   &=
   \left\{
    \begin{array}{ll}
      (p+k(i+s-j),p+kt,[0)), & \hbox{if~} j<s;\\
      (p+ki,p+kt,[0)),       & \hbox{if~} j=s;\\
      (ki,k(j+t-s),[0)),     & \hbox{if~} j>s,
    \end{array}
  \right.
\end{align*}
and
\begin{align*}
  ((i,j,[1))\cdot(s,t,[0)))\beta_{k,p}&=
    \left\{
     \begin{array}{ll}
       (i+s-j,t,(j-s+[1)\cap[0))\beta_{k,p},  & \hbox{if~} j<s;\\
       (i,t,[1)\cap[0))\beta_{k,p},           & \hbox{if~} j=s;\\
       (i,j+t-s,[1)\cap(s-j+[0)))\beta_{k,p}, & \hbox{if~} j>s
     \end{array}
   \right.
   = \\
   &=
   \left\{
     \begin{array}{ll}
       (i+s-j,t,[0))\beta_{k,p}, & \hbox{if~} j<s;\\
       (i,t,[1))\beta_{k,p},     & \hbox{if~} j=s;\\
       (i,j+t-s,[1))\beta_{k,p}, & \hbox{if~} j>s
     \end{array}
   \right.
   = \\
   &=
   \left\{
     \begin{array}{ll}
       (k(i+s-j),kt,[0)),     & \hbox{if~} j<s;\\
       (p+ki,p+kt,[0)),       & \hbox{if~} j=s;\\
       (p+ki,p+k(j+t-s),[0)), & \hbox{if~} j>s,
     \end{array}
   \right.
\end{align*}
\begin{align*}
  (i,j,[1))&\beta_{k,p}\cdot (s,t,[0))\beta_{k,p}=(p+ki,p+kj,[0))\cdot(ks,kt,[0))= \\
   &=
   \left\{
     \begin{array}{ll}
       (p+ki+ks-p-kj,kt,(p+kj-ks+[0))\cap[0)), & \hbox{if~} p+kj<ks; \\
       (p+ki,kt,[0)\cap[0)),                   & \hbox{if~} p+kj=ks; \\
       (p+ki,p+kj+kt-ks,[0)\cap(ks-p-kj+[0))), & \hbox{if~} p+kj>ks
     \end{array}
   \right.
   = \\
   &=
   \left\{
     \begin{array}{ll}
       (k(i+s-j),kt,[0)),     & \hbox{if~} p+kj<ks; \\
       (p+ki,kt,[0)),         & \hbox{if~} p+kj=ks; \\
       (p+ki,p+k(j+t-s),[0)), & \hbox{if~} p+kj>ks
     \end{array}
   \right.
   = 
\\
   &=
   \left\{
     \begin{array}{ll}
       (k(i+s-j),kt,[0)),     & \hbox{if~} kj<ks; \\
       (ki,kt,[0)),           & \hbox{if~} p+kj=ks \hbox{~and~} p=0; \\
       \texttt{vagueness},    & \hbox{if~} p+kj=ks \hbox{~and~} p\neq0; \\
       (p+ki,p+kt,[0)),       & \hbox{if~} kj=ks; \\
       (p+ki,p+k(j+t-s),[0)), & \hbox{if~} kj>ks
     \end{array}
   \right.
   = 
\end{align*}%\\
\begin{align*}     
   &=
   \left\{
     \begin{array}{ll}
       (k(i+s-j),kt,[0)),     & \hbox{if~} j<s;\\
       (p+ki,p+kt,[0)),       & \hbox{if~} j=s;\\
       (p+ki,p+k(j+t-s),[0)), & \hbox{if~} j>s,
     \end{array}
   \right.
\end{align*}
because $p\in\{1,\ldots,k-1\}$. Thus, $\beta_{k,p}$ is an endomorphism of the semigroup $\boldsymbol{B}_{\mathbb{Z}}^{\mathscr{F}^2}$.
\end{proof}

\begin{lemma}\label{lemma-2.5}
Let $\mathfrak{e}$ be a  $(0,0,[0))$-endomorphism of the semigroup $\boldsymbol{B}_{\mathbb{Z}}^{\mathscr{F}^2}$. Then  there exists a non-negative integer $n$ such that $(i,j,[0))\mathfrak{e}=(ni,nj,[0))$ for all $i,j\in\mathbb{Z}$.
\end{lemma}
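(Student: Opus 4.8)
The plan is to track the image of the copy $\boldsymbol{B}_{\mathbb{Z}}^{\{[0)\}}=\{(i,j,[0))\colon i,j\in\mathbb{Z}\}$ of the extended bicyclic semigroup under $\mathfrak{e}$, using only that $\mathfrak{e}$ fixes $(0,0,[0))$, that $\mathfrak{e}$ (being a homomorphism of an inverse semigroup, by uniqueness of inverses) commutes with inversion, and that $\mathfrak{e}$ preserves the natural partial order on idempotents (since if $ef=fe=e$ then $(e\mathfrak{e})(f\mathfrak{e})=(f\mathfrak{e})(e\mathfrak{e})=e\mathfrak{e}$). First I record the elementary identities, all immediate from \eqref{eq-1.2}: for every $x=(a,b,F)\in\boldsymbol{B}_{\mathbb{Z}}^{\mathscr{F}^2}$ one has $x^{-1}=(b,a,F)$, $xx^{-1}=(a,a,F)$ and $x^{-1}x=(b,b,F)$; the idempotents of $\boldsymbol{B}_{\mathbb{Z}}^{\mathscr{F}^2}$ are exactly the triples $(m,m,F)$; and $(m,m,[0))\preccurlyeq(0,0,[0))$ if and only if $m\geqslant0$, while $(s,s,[0))\preccurlyeq(j,j,[0))$ whenever $j<s$.

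The first step pins down the images of the ``diagonal'' elements. For $i\in\mathbb{Z}$ put $c_i=(i,0,[0))$, so that $c_ic_i^{-1}=(i,i,[0))$ and $c_i^{-1}c_i=(0,0,[0))$. Writing $c_i\mathfrak{e}=(a_i,b_i,F_i)$ and using that $\mathfrak{e}$ commutes with inversion, $(b_i,b_i,F_i)=(c_i\mathfrak{e})^{-1}(c_i\mathfrak{e})=(c_i^{-1}c_i)\mathfrak{e}=(0,0,[0))\mathfrak{e}=(0,0,[0))$, whence $b_i=0$ and $F_i=[0)$; therefore $(i,i,[0))\mathfrak{e}=(c_ic_i^{-1})\mathfrak{e}=(c_i\mathfrak{e})(c_i\mathfrak{e})^{-1}=(a_i,a_i,[0))$. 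Put $n_i:=a_i$; as $c_0=(0,0,[0))$ is fixed, $n_0=0$. Next take arbitrary $i,j\in\mathbb{Z}$ and write $(i,j,[0))\mathfrak{e}=(a,b,F)$. From $(i,j,[0))(j,i,[0))=(i,i,[0))$, $(j,i,[0))(i,j,[0))=(j,j,[0))$ and $(j,i,[0))=(i,j,[0))^{-1}$ we get, applying $\mathfrak{e}$, that $(a,a,F)=(i,i,[0))\mathfrak{e}=(n_i,n_i,[0))$ and $(b,b,F)=(j,j,[0))\mathfrak{e}=(n_j,n_j,[0))$, so $F=[0)$, $a=n_i$, $b=n_j$, i.e. $(i,j,[0))\mathfrak{e}=(n_i,n_j,[0))$.

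The second step exploits multiplicativity inside $\boldsymbol{B}_{\mathbb{Z}}^{\{[0)\}}$. Since $\mathfrak{e}$ preserves $\preccurlyeq$ and $(s,s,[0))\preccurlyeq(j,j,[0))$ for $j<s$, we have $n_j\leqslant n_s$ whenever $j<s$; in particular $0=n_0\leqslant n_1=:n$. For $j<s$ formula \eqref{eq-1.2} gives $(i,j,[0))\cdot(s,t,[0))=(i-j+s,t,[0))$; applying $\mathfrak{e}$ and the first step,
\begin{equation*}
(n_{i-j+s},n_t,[0))=(n_i,n_j,[0))\cdot(n_s,n_t,[0))=(n_i-n_j+n_s,n_t,[0)),
\end{equation*}
the last equality because $n_j\leqslant n_s$. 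Hence $n_{i-j+s}=n_i-n_j+n_s$ for all $i\in\mathbb{Z}$ and all $j<s$; specializing to $j=0$, $s=1$ yields the recurrence $n_{i+1}=n_i+n_1$ for every $i\in\mathbb{Z}$, and therefore $n_i=n\cdot i$ for all $i\in\mathbb{Z}$. Together with the first step this gives $(i,j,[0))\mathfrak{e}=(ni,nj,[0))$ for all $i,j\in\mathbb{Z}$ with the non-negative integer $n=n_1$, which is the assertion.

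The only genuinely load-bearing point is the beginning of the first step: showing that $(i,i,[0))\mathfrak{e}$, and then $(i,j,[0))\mathfrak{e}$, stays in the ``$[0)$-part'' $\boldsymbol{B}_{\mathbb{Z}}^{\{[0)\}}$ rather than drifting into the $[1)$-part. Once this is secured by the identities $xx^{-1}=(a,a,F)$, $x^{-1}x=(b,b,F)$ together with $(0,0,[0))\mathfrak{e}=(0,0,[0))$, the rest is the routine computation above. Note that injectivity of $\mathfrak{e}$ is nowhere used, and the value $n=0$ — where $\mathfrak{e}$ collapses $\boldsymbol{B}_{\mathbb{Z}}^{\{[0)\}}$ onto $(0,0,[0))$ — is permitted.
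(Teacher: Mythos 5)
Your proof is correct, and it is genuinely more self-contained than the paper's. The paper disposes of the lemma in three lines by restriction: it notes that $\boldsymbol{B}_{\mathbb{Z}}^{\{[0)\}}$ is a copy of the extended bicyclic semigroup, invokes Lemma~3 of \cite{Gutik-Prokhorenkova-Sekh=2021} to assert that the restriction of $\mathfrak{e}$ to this copy is an endomorphism \emph{of the copy} (i.e.\ that the image does not leave the $[0)$-layer), and then applies Lemma~5 of the same reference, which classifies the endomorphisms of the extended bicyclic semigroup fixing $(0,0)$ as $(i,j)\mapsto(ni,nj)$. You instead prove both ingredients inside $\boldsymbol{B}_{\mathbb{Z}}^{\mathscr{F}^2}$: the containment of the image in the $[0)$-layer via the elements $c_i=(i,0,[0))$, the identities $x x^{-1}=(a,a,F)$, $x^{-1}x=(b,b,F)$, commutation of a homomorphism with inversion, and the fixed idempotent $(0,0,[0))$; and the linear form $n_i=ni$ via order-preservation on idempotents (giving $n_j\leqslant n_s$ for $j<s$, hence the correct case of formula \eqref{eq-1.2}) plus the recurrence $n_{i+1}=n_i+n_1$. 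Your first step is in fact the more delicate point, since the cited Lemma~3 concerns the extended bicyclic semigroup on its own, while the containment claim is about where a homomorphism of the larger semigroup sends the subsemigroup; your direct argument settles exactly this, at the cost of redoing (in your second step) the computation that the citation to Lemma~5 is meant to encapsulate. Your closing observations — that injectivity is never used and that $n=0$ is allowed — are consistent with the statement as given.
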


\begin{proof}
By by Proposition 5 of \cite{Gutik-Pozdniakova=2021} the subsemigroup $\boldsymbol{B}_{\mathbb{Z}}^{\{[0)\}}$ is isomorphic to the extended bicyclic semigroup. By Lemma~3 of \cite{Gutik-Prokhorenkova-Sekh=2021}, the restriction of the transformation $\mathfrak{e}$ onto the subsemigroup $\boldsymbol{B}_{\mathbb{Z}}^{\{[0)\}}$ is an endomorphisms of $\boldsymbol{B}_{\mathbb{Z}}^{\{[0)\}}$. Then Lemma~5 of \cite{Gutik-Prokhorenkova-Sekh=2021} implies the statement of the lemma.
\end{proof}

\begin{theorem}\label{theorem-2.6}
Let $\mathfrak{e}$ be an injective $(0,0,[0))$-endomorphism of the semigroup $\boldsymbol{B}_{\mathbb{Z}}^{\mathscr{F}^2}$. Then one of the following conditions holds:
\begin{enumerate}
  \item\label{theorem-2.3(1)} there exist a positive integer $k$ and $p\in\{0,\ldots,k-1\}$ such that $\mathfrak{e}=\alpha_{k,p}$;
  \item\label{theorem-2.3(2)} there exist a positive integer $k\geqslant 2$ and $p\in\{1,\ldots,k-1\}$ such that $\mathfrak{e}=\beta_{k,p}$.
\end{enumerate}
\end{theorem}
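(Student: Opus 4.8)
My plan is first to reduce to the normalized situation. By Remark~\ref{remark-2.1} we may assume $[0)\in\mathscr{F}^2$; since an $\omega$-closed two-element family of nonempty inductive subsets of $\omega$ is, after the translation of Remark~\ref{remark-2.1}, necessarily $\{[0),[1)\}$, we work with $\mathscr{F}^2=\{[0),[1)\}$. Next I apply Lemma~\ref{lemma-2.5} to the $(0,0,[0))$-endomorphism $\mathfrak{e}$: it yields a non-negative integer $k$ with $(i,j,[0))\mathfrak{e}=(ki,kj,[0))$ for all $i,j\in\mathbb{Z}$, and injectivity of $\mathfrak{e}$ forces $k\geqslant 1$. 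So $\mathfrak{e}$ is already determined on the subsemigroup $\boldsymbol{B}_{\mathbb{Z}}^{\{[0)\}}$ by one positive integer $k$, and it remains only to compute $(i,j,[1))\mathfrak{e}$.

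The heart of the argument is to pin down the idempotent $(0,0,[1))\mathfrak{e}$. A direct computation with \eqref{eq-1.2} shows $(1,1,[0))\preccurlyeq(0,0,[1))\preccurlyeq(0,0,[0))$, both inequalities strict. Since every semigroup homomorphism preserves the natural partial order and an injective one preserves it strictly, applying $\mathfrak{e}$ gives
\[
  (0,0,[0))\succ(0,0,[1))\mathfrak{e}\succ(k,k,[0)).
\]
Using the description of the band $E(\boldsymbol{B}_{\mathbb{Z}}^{\mathscr{F}^2})$ and of $\preccurlyeq$ from \cite{Gutik-Pozdniakova=2021}, the idempotents below $(0,0,[0))$ form a single chain of order type $\omega$,
\[
  (0,0,[0))\succ(0,0,[1))\succ(1,1,[0))\succ(1,1,[1))\succ(2,2,[0))\succ\cdots ,
\]
so $(0,0,[1))\mathfrak{e}$ is one of the $2k-1$ idempotents strictly between $(0,0,[0))$ and $(k,k,[0))$ on this chain. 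Hence either $(0,0,[1))\mathfrak{e}=(p,p,[1))$ for some $p\in\{0,\dots,k-1\}$, or $(0,0,[1))\mathfrak{e}=(p,p,[0))$ for some $p\in\{1,\dots,k-1\}$ (the second alternative forcing $k\geqslant 2$).

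To finish, I would use the identity $(i,j,[1))=(i,0,[0))\cdot(0,0,[1))\cdot(0,j,[0))$ in $\boldsymbol{B}_{\mathbb{Z}}^{\mathscr{F}^2}$, immediate from \eqref{eq-1.2}, to obtain
\[
  (i,j,[1))\mathfrak{e}=(ki,0,[0))\cdot\bigl((0,0,[1))\mathfrak{e}\bigr)\cdot(0,kj,[0)),
\]
and then evaluate the right-hand side via \eqref{eq-1.2} in each case. When $(0,0,[1))\mathfrak{e}=(p,p,[1))$ this gives $(i,j,[1))\mathfrak{e}=(p+ki,p+kj,[1))$, so $\mathfrak{e}=\alpha_{k,p}$, i.e. case~\eqref{theorem-2.3(1)}; when $(0,0,[1))\mathfrak{e}=(p,p,[0))$ it gives $(i,j,[1))\mathfrak{e}=(p+ki,p+kj,[0))$, so $\mathfrak{e}=\beta_{k,p}$, i.e. case~\eqref{theorem-2.3(2)}. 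As Lemmas~\ref{lemma-2.3} and~\ref{lemma-2.4} already show that every $\alpha_{k,p}$ and $\beta_{k,p}$ is an endomorphism, no extra verification is needed once $\mathfrak{e}$ has been shown to coincide with one of them on all triples $(i,j,[0))$ and $(i,j,[1))$.

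The step I expect to be the main obstacle is the middle one: correctly identifying which idempotents lie below $(0,0,[0))$ and seeing that they constitute a single $\omega$-chain (equivalently, that $(m,m,[a))\mapsto 2m+a$ is an order anti-isomorphism onto $\omega$), which is exactly where the structure theory of $\boldsymbol{B}_{\mathbb{Z}}^{\mathscr{F}^2}$ from \cite{Gutik-Pozdniakova=2021} comes in. The first step is an immediate consequence of Lemma~\ref{lemma-2.5}, and the last step, although it requires a handful of case distinctions in \eqref{eq-1.2}, is entirely routine.
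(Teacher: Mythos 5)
Your proposal is correct, but it takes a genuinely different route from the paper's proof. The paper writes $\boldsymbol{B}_{\mathbb{Z}}^{\mathscr{F}^2}$ as the increasing union of the corner subsemigroups $\boldsymbol{B}_{\mathbb{Z}}^{\mathscr{F}^2}(-n,-n,0)\cong\boldsymbol{B}_{\omega}^{\mathscr{F}^2}$, transports the endomorphism to $\boldsymbol{B}_{\mathbb{Z}}^{\mathscr{F}^2}(0,0,0)$ through the unique shift isomorphisms $\mathfrak{I}_0^{-n}$ and a commutative diagram, invokes the classification of injective monoid endomorphisms of $\boldsymbol{B}_{\omega}^{\mathscr{F}^2}$ recalled in Remark~\ref{remark-2.2}, and then translates the formulas for $\alpha_{k,p}$ and $\beta_{k,p}$ back, letting $n$ vary. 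You instead stay inside $\boldsymbol{B}_{\mathbb{Z}}^{\mathscr{F}^2}$: Lemma~\ref{lemma-2.5} plus injectivity fixes $\mathfrak{e}$ on $\boldsymbol{B}_{\mathbb{Z}}^{\{[0)\}}$ with $k\geqslant 1$; the band of $\boldsymbol{B}_{\mathbb{Z}}^{\mathscr{F}^2}$ is a chain (via $(m,m,[a))\mapsto 2m+a$), and since an injective homomorphism preserves the natural partial order strictly, $(0,0,[1))\mathfrak{e}$ must be one of the $2k-1$ idempotents strictly between $(0,0,[0))$ and $(k,k,[0))=(1,1,[0))\mathfrak{e}$; finally the factorization $(i,j,[1))=(i,0,[0))\cdot(0,0,[1))\cdot(0,j,[0))$, which does hold by \eqref{eq-1.2}, propagates this single value, and the two evaluations indeed give $(p+ki,p+kj,[1))$ (so $\mathfrak{e}=\alpha_{k,p}$) and $(p+ki,p+kj,[0))$ with $k\geqslant 2$, $p\in\{1,\dots,k-1\}$ (so $\mathfrak{e}=\beta_{k,p}$). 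What each approach buys: yours is self-contained and more elementary, needing no appeal to the $\omega$-case result of \cite{Gutik-Pozdniakova=2022} nor to the uniqueness-of-isomorphism argument, and it sidesteps the delicate point in the paper's reduction that for $k\geqslant 2$ the restriction of $\mathfrak{e}$ to a corner $\boldsymbol{B}_{\mathbb{Z}}^{\mathscr{F}^2}(-n,-n,0)$ need not map that corner into itself; the paper's route, in turn, recycles the known $\boldsymbol{B}_{\omega}^{\mathscr{F}^2}$ classification and makes the link between the $\omega$- and $\mathbb{Z}$-cases explicit. Two small points to make explicit in a write-up: justify (as you sketch) that the normalized two-element $\omega$-closed family is necessarily $\{[0),[1)\}$, and record the order description of $E(\boldsymbol{B}_{\mathbb{Z}}^{\mathscr{F}^2})$ from \cite{Gutik-Pozdniakova=2021} that yields the chain, since the whole middle step rests on it.
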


\begin{proof}
It is obvious that for any  $(i,j,[l))\in \boldsymbol{B}_{\mathbb{Z}}^{\mathscr{F}^2}$, $l=0,1$, there exists a non-negative integer $n$ such that  $(i,j,[l))\in \boldsymbol{B}_{\mathbb{Z}}^{\mathscr{F}^2}(-n,-n,0)$. This implies the equality
\begin{equation*}
  \boldsymbol{B}_{\mathbb{Z}}^{\mathscr{F}^2}=\bigcup_{n\in\omega}\boldsymbol{B}_{\mathbb{Z}}^{\mathscr{F}^2}(-n,-n,0).
\end{equation*}
Also by the semigroup operation of $\boldsymbol{B}_{\mathbb{Z}}^{\mathscr{F}^2}$ for $m,n\in \omega$ we have that $\boldsymbol{B}_{\mathbb{Z}}^{\mathscr{F}^2}(-n,-n,0)\subseteq \boldsymbol{B}_{\mathbb{Z}}^{\mathscr{F}^2}(-m,-m,0)$ if and only if $m\geqslant n$.

Since $\mathfrak{e}$ is an injective $(0,0,[0))$-endomorphism of the semigroup $\boldsymbol{B}_{\mathbb{Z}}^{\mathscr{F}^2}$, Lemma~\ref{lemma-2.5} implies that there exists a positive integer $k$ such that $(i,j,[0))\mathfrak{e}=(ki,kj,[0))$ for all $i,j\in\mathbb{Z}$.

Fix an arbitrary positive integer $n$. By Proposition 2~\cite{Gutik-Pozdniakova=2023},  $\boldsymbol{B}_{\mathbb{Z}}^{\mathscr{F}^2}(-n,-n,0)$ is a subsemigroup of $\boldsymbol{B}_{\mathbb{Z}}^{\mathscr{F}^2}$ which is isomorphic to $\boldsymbol{B}_{\omega}^{\mathscr{F}^2}$. This implies that the semigroups $\boldsymbol{B}_{\mathbb{Z}}^{\mathscr{F}^2}(-n,-n,0)$ and $\boldsymbol{B}_{\mathbb{Z}}^{\mathscr{F}^2}(0,0,0)$ are isomorphic. By Corollary~2 from \cite{Gutik-Mykhalenych=2021} every automorphism of the semigroup $\boldsymbol{B}_{\omega}^{\mathscr{F}^2}$ is the identity map, and hence every automorphism of the semigroup $\boldsymbol{B}_{\mathbb{Z}}^{\mathscr{F}^2}(0,0,0)$ is the identity map, too.

We define the isomorphism $\mathfrak{I}_0^{-n}\colon \boldsymbol{B}_{\mathbb{Z}}^{\mathscr{F}^2}(-n,-n,0)\to \boldsymbol{B}_{\mathbb{Z}}^{\mathscr{F}^2}(0,0,0)$ by the formula
\begin{equation*}
  (i-n,j-n,[s))\mathfrak{I}_0^{-n}=(i,j,[s)),
\end{equation*}
for any positive integers $i,j$ and $s\in\{0,1\}$. The above arguments imply that so defined isomorphism is unique. Hence we have that for any injective endomorphism $\mathfrak{e}_{-n}$ of the semigroup $\boldsymbol{B}_{\mathbb{Z}}^{\mathscr{F}^2}(-n,-n,0)$ there exists an injective endomorphism $\mathfrak{e}_{0}$ of the semigroup $\boldsymbol{B}_{\mathbb{Z}}^{\mathscr{F}^2}(0,0,0)$ such that the following diagram
\begin{equation*}
\xymatrix{
 \boldsymbol{B}_{\mathbb{Z}}^{\mathscr{F}^2}(-n,-n,0)\ar[rr]^{\mathfrak{e}_{-n}}\ar[dd]_{\mathfrak{I}_0^{-n}}  & & \boldsymbol{B}_{\mathbb{Z}}^{\mathscr{F}^2}(-n,-n,0)\ar[dd]^{\mathfrak{I}_0^{-n}}\\
 & &\\
 \boldsymbol{B}_{\mathbb{Z}}^{\mathscr{F}^2}(0,0,0)\ar@{-->}[rr]_{\mathfrak{e}_{0}} &&  \boldsymbol{B}_{\mathbb{Z}}^{\mathscr{F}^2}(0,0,0)}
\end{equation*}
is commutative. Hence, by Remark~\ref{remark-2.2} we have that for any injective endomorphism $\mathfrak{e}_{-n}$ of the semigroup $\boldsymbol{B}_{\mathbb{Z}}^{\mathscr{F}^2}(-n,-n,0)$ one of the following conditions holds:
\begin{enumerate}
  \item there exist a positive integer $k$ and $p\in\{0,\ldots,k-1\}$ such that $\mathfrak{e}_0=\alpha_{k,p}$;
  \item there exist a positive integer $k\geqslant 2$ and $p\in\{1,\ldots,k-1\}$ such that $\mathfrak{e}_0=\beta_{k,p}$.
\end{enumerate}

If $\mathfrak{e}_0=\alpha_{k,p}$ then
\begin{align*}
        (i-n,j-n,[0))\mathfrak{e}_{-n}&=(((i-n,j-n,[0))\mathfrak{I}_0^{-n})\alpha_{k,p})(\mathfrak{I}_0^{-n})^{-1}=\\
                                   &=((i,j,[0))\alpha_{k,p})(\mathfrak{I}_0^{-n})^{-1}=\\
                                   &=(ki,kj,[0))(\mathfrak{I}_0^{-n})^{-1}=\\
                                   &=(ki-n,kj-n,[0))
\end{align*}
and
\begin{align*}
        (i-n,j-n,[1))\mathfrak{e}_{-n}&=(((i-n,j-n,[1))\mathfrak{I}_0^{-n})\alpha_{k,p})(\mathfrak{I}_0^{-n})^{-1}=\\
                                   &=((i,j,[0))\alpha_{k,p})(\mathfrak{I}_0^{-n})^{-1}=\\
                                   &=(p+ki,p+kj,[1))(\mathfrak{I}_0^{-n})^{-1}=\\
                                   &=(p+ki-n,p+kj-n,[1)),
      \end{align*}
for any positive integers $i,j$.

If $\mathfrak{e}_0=\beta_{k,p}$ then
\begin{align*}
        (i-n,j-n,[0))\mathfrak{e}_{-n}&=(((i-n,j-n,[0))\mathfrak{I}_0^{-n})\beta_{k,p})(\mathfrak{I}_0^{-n})^{-1}=\\
                                   &=((i,j,[0))\beta_{k,p})(\mathfrak{I}_0^{-n})^{-1}=\\
                                   &=(ki,kj,[0))(\mathfrak{I}_0^{-n})^{-1}=\\
                                   &=(ki-n,kj-n,[0))
\end{align*}
and
\begin{align*}
        (i-n,j-n,[1))\mathfrak{e}_{-n}&=(((i-n,j-n,[1))\mathfrak{I}_0^{-n})\beta_{k,p})(\mathfrak{I}_0^{-n})^{-1}=\\
                                   &=((i,j,[0))\beta_{k,p})(\mathfrak{I}_0^{-n})^{-1}=\\
                                   &=(p+ki,p+kj,[0))(\mathfrak{I}_0^{-n})^{-1}=\\
                                   &=(p+ki-n,p+kj-n,[0)),
      \end{align*}
for any positive integers $i,j$.

This completes the proof of the theorem.
\end{proof}

\section{On injective endomorphisms of the semigroup $\boldsymbol{B}_{\mathbb{Z}}^{\mathscr{F}^2}$}\label{section-3}

\begin{remark}\label{remark-3.1}
\begin{enumerate}
  \item\label{remark-3.1(1)} By Theorem~1 of \cite{Gutik-Pozdniakova=2023} every $(0,0,[0))$-automorphism of the semigroup $\boldsymbol{B}_{\mathbb{Z}}^{\mathscr{F}^2}$ is the identity map.
  \item\label{remark-3.1(2)} For every integer $s$ the map $\mathfrak{h}_s\colon \boldsymbol{B}_{\mathbb{Z}}^{\mathscr{F}^2}\to\boldsymbol{B}_{\mathbb{Z}}^{\mathscr{F}^2}$, $(i,j,[p))\mapsto(i+s,j+s,[q))$, $i,j\in\mathbb{Z}$, $q\in\{0,1\}$, is an automorphism of the semigroup $\boldsymbol{B}_{\mathbb{Z}}^{\mathscr{F}^2}$ (Proposition~6 of \cite{Gutik-Pozdniakova=2023}).
  \item\label{remark-3.1(3)} The map $\widetilde{\mathfrak{a}}\colon \boldsymbol{B}_{\mathbb{Z}}^{\mathscr{F}^2}\to\boldsymbol{B}_{\mathbb{Z}}^{\mathscr{F}^2}$, $(i,j,[p))\mapsto(i+q,j+q,[1-q))$, $i,j\in\mathbb{Z}$, $q\in\{0,1\}$, is an automorphism of the semigroup $\boldsymbol{B}_{\mathbb{Z}}^{\mathscr{F}^2}$ (Lemma~2 of \cite{Gutik-Pozdniakova=2023}), and moreover $\widetilde{\mathfrak{a}}^2=\mathfrak{h}_1$.
\end{enumerate}
\end{remark}

\begin{lemma}\label{lemma-3.2}
For any endomorphism $\mathfrak{e}$ of the semigroup $\boldsymbol{B}_{\mathbb{Z}}^{\mathscr{F}^2}$ there exists an automorphism $\mathfrak{a}$ of $\boldsymbol{B}_{\mathbb{Z}}^{\mathscr{F}^2}$ such that $(0,0,[0))\mathfrak{e}=(0,0,[0))\mathfrak{a}$. Moreover, $\mathfrak{a}=\widetilde{\mathfrak{a}}^{2s}=\mathfrak{h}_s$ in the case when $(0,0,[0))\mathfrak{e}=(s,s,[0))$, and $\mathfrak{a}=\widetilde{\mathfrak{a}}^{2s+1}=\mathfrak{h}_{s}\widetilde{\mathfrak{a}}$ in the case when $(0,0,[0))\mathfrak{e}=(s,s,[1))$ for some integer $s$.
\end{lemma}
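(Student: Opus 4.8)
The plan is to reduce the statement to the elementary fact that an endomorphism carries idempotents to idempotents, together with the rigid structure of the band of $\boldsymbol{B}_{\mathbb{Z}}^{\mathscr{F}^2}$. First I would observe that $(0,0,[0))$ is an idempotent of $\boldsymbol{B}_{\mathbb{Z}}^{\mathscr{F}^2}$, so $(0,0,[0))\mathfrak{e}=\big((0,0,[0))\mathfrak{e}\big)^2$ is an idempotent as well. By the description of the idempotents of $\boldsymbol{B}_{\mathbb{Z}}^{\mathscr{F}}$ in \cite{Gutik-Pozdniakova=2021} (equivalently, by a one-line check with \eqref{eq-1.2}: a triple $(i,j,F)$ is an idempotent if and only if $i=j$) and since the family $\mathscr{F}^2$ equals $\{[0),[1)\}$, it follows that $(0,0,[0))\mathfrak{e}=(s,s,[0))$ or $(0,0,[0))\mathfrak{e}=(s,s,[1))$ for some $s\in\mathbb{Z}$.

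Suppose first that $(0,0,[0))\mathfrak{e}=(s,s,[0))$. I would set $\mathfrak{a}=\mathfrak{h}_s$, which is an automorphism of $\boldsymbol{B}_{\mathbb{Z}}^{\mathscr{F}^2}$ by Remark~\ref{remark-3.1}\eqref{remark-3.1(2)}; from the definition of $\mathfrak{h}_s$ one gets $(0,0,[0))\mathfrak{h}_s=(s,s,[0))=(0,0,[0))\mathfrak{e}$. Since $\mathfrak{h}_s\mathfrak{h}_t=\mathfrak{h}_{s+t}$ for all $s,t\in\mathbb{Z}$ and $\widetilde{\mathfrak{a}}^{2}=\mathfrak{h}_1$ by Remark~\ref{remark-3.1}\eqref{remark-3.1(3)}, we obtain $\widetilde{\mathfrak{a}}^{2s}=(\widetilde{\mathfrak{a}}^{2})^{s}=\mathfrak{h}_1^{\,s}=\mathfrak{h}_s=\mathfrak{a}$, the case of negative $s$ being covered because $\widetilde{\mathfrak{a}}$ is invertible.

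Now suppose that $(0,0,[0))\mathfrak{e}=(s,s,[1))$. I would set $\mathfrak{a}=\mathfrak{h}_s\widetilde{\mathfrak{a}}$; being a composition of automorphisms (Remark~\ref{remark-3.1}\eqref{remark-3.1(2)} and \eqref{remark-3.1(3)}) it is an automorphism of $\boldsymbol{B}_{\mathbb{Z}}^{\mathscr{F}^2}$. Recalling that the transformations act on the right, a direct computation with the formulas for $\mathfrak{h}_s$ and $\widetilde{\mathfrak{a}}$ gives $(0,0,[0))\mathfrak{a}=\big((0,0,[0))\mathfrak{h}_s\big)\widetilde{\mathfrak{a}}=(s,s,[0))\widetilde{\mathfrak{a}}=(s,s,[1))=(0,0,[0))\mathfrak{e}$. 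Finally, using $\mathfrak{h}_s=\widetilde{\mathfrak{a}}^{2s}$ from the previous paragraph, $\mathfrak{a}=\mathfrak{h}_s\widetilde{\mathfrak{a}}=\widetilde{\mathfrak{a}}^{2s}\widetilde{\mathfrak{a}}=\widetilde{\mathfrak{a}}^{2s+1}$, which is exactly the asserted form.

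I do not anticipate any genuine obstacle; the whole argument is bookkeeping. The only points requiring care are the convention that $\mathfrak{e},\mathfrak{h}_s,\widetilde{\mathfrak{a}}$ compose as right actions, and checking that the identities $\widetilde{\mathfrak{a}}^{2s}=\mathfrak{h}_s$ hold for negative $s$ as well; before finalizing I would also re-confirm that the quoted description of $E(\boldsymbol{B}_{\mathbb{Z}}^{\mathscr{F}^2})$ indeed forces $(0,0,[0))\mathfrak{e}$ to have equal first and second coordinates.
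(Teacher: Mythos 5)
Your proposal is correct and follows essentially the same route as the paper: both note that $(0,0,[0))\mathfrak{e}$ is an idempotent, hence of the form $(s,s,[q))$ with $q\in\{0,1\}$ by the description of $E(\boldsymbol{B}_{\mathbb{Z}}^{\mathscr{F}^2})$ in \cite{Gutik-Pozdniakova=2021}, and then verify directly from Remark~\ref{remark-3.1} that $\mathfrak{h}_s=\widetilde{\mathfrak{a}}^{2s}$ and $\mathfrak{h}_s\widetilde{\mathfrak{a}}=\widetilde{\mathfrak{a}}^{2s+1}$ send $(0,0,[0))$ to $(s,s,[0))$ and $(s,s,[1))$, respectively. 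Your write-up merely makes the paper's ``simple verifications'' explicit.
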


\begin{proof}
Since any homomorphic image of an idempotent is again an idempotent, by Lemma~1 of \cite{Gutik-Pozdniakova=2021} there exist an integer $s$ and $q\in\{0,1\}$ such that $(0,0,[0))\mathfrak{e}=(s,s,[q))$. Simple verifications and Re\-mark~\ref{remark-3.1} imply that $(0,0,[0))\widetilde{\mathfrak{a}}^{2s}=0,0,[0))=(s,s,[0))$ and  $(0,0,[0))\widetilde{\mathfrak{a}}^{2s+1}=(0,0,[0))(\mathfrak{h}_{s}\widetilde{\mathfrak{a}})=(s,s,[1))$.
\end{proof}

\begin{theorem}\label{theorem-3.3}
For any endomorphism $\mathfrak{e}$ of the semigroup $\boldsymbol{B}_{\mathbb{Z}}^{\mathscr{F}^2}$ there exist a $(0,0,[0))$-endomorphism $\mathfrak{e}_0$ of $\boldsymbol{B}_{\mathbb{Z}}^{\mathscr{F}^2}$ and an automorphism $\mathfrak{a}$ of $\boldsymbol{B}_{\mathbb{Z}}^{\mathscr{F}^2}$ such that $\mathfrak{e}=\mathfrak{e}_0\mathfrak{a}$. Moreover, $\mathfrak{e}=\mathfrak{e}_0\widetilde{\mathfrak{a}}^{2s}=\mathfrak{e}_0\mathfrak{h}_s$ in the case when $(0,0,[0))\mathfrak{e}=(s,s,[0))$, and $\mathfrak{e}=\mathfrak{e}_0\widetilde{\mathfrak{a}}^{2s+1}=\mathfrak{e}_0\mathfrak{h}_{s}\widetilde{\mathfrak{a}}$ in the case when $(0,0,[0))\mathfrak{e}=(s,s,[1))$ for some integer $s$.
\end{theorem}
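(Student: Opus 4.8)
The plan is to reduce the statement to Lemma~\ref{lemma-3.2} by exploiting that an automorphism is an invertible transformation. First I would apply Lemma~\ref{lemma-3.2} to obtain an automorphism $\mathfrak{a}$ of $\boldsymbol{B}_{\mathbb{Z}}^{\mathscr{F}^2}$ with $(0,0,[0))\mathfrak{e}=(0,0,[0))\mathfrak{a}$, recording that by Lemma~\ref{lemma-3.2} one may in fact take $\mathfrak{a}=\widetilde{\mathfrak{a}}^{2s}=\mathfrak{h}_s$ when $(0,0,[0))\mathfrak{e}=(s,s,[0))$ and $\mathfrak{a}=\widetilde{\mathfrak{a}}^{2s+1}=\mathfrak{h}_s\widetilde{\mathfrak{a}}$ when $(0,0,[0))\mathfrak{e}=(s,s,[1))$. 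Since the automorphisms of $\boldsymbol{B}_{\mathbb{Z}}^{\mathscr{F}^2}$ form a group, the inverse self-map $\mathfrak{a}^{-1}$ is again an automorphism of $\boldsymbol{B}_{\mathbb{Z}}^{\mathscr{F}^2}$.

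Next I would put $\mathfrak{e}_0=\mathfrak{e}\mathfrak{a}^{-1}$, where composition of transformations is taken as in the rest of the paper (maps act on the right, so $(x)(\mathfrak{e}\mathfrak{a}^{-1})=((x)\mathfrak{e})\mathfrak{a}^{-1}$ for $x\in\boldsymbol{B}_{\mathbb{Z}}^{\mathscr{F}^2}$). Being the composition of the endomorphism $\mathfrak{e}$ with the automorphism $\mathfrak{a}^{-1}$, the map $\mathfrak{e}_0$ is an endomorphism of $\boldsymbol{B}_{\mathbb{Z}}^{\mathscr{F}^2}$. Moreover
\begin{equation*}
  (0,0,[0))\mathfrak{e}_0=\bigl((0,0,[0))\mathfrak{e}\bigr)\mathfrak{a}^{-1}=\bigl((0,0,[0))\mathfrak{a}\bigr)\mathfrak{a}^{-1}=(0,0,[0)),
\end{equation*}
so $\mathfrak{e}_0$ is a $(0,0,[0))$-endomorphism, and $\mathfrak{e}_0\mathfrak{a}=\mathfrak{e}\mathfrak{a}^{-1}\mathfrak{a}=\mathfrak{e}$, which is the desired factorization.

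Finally, for the ``moreover'' part I would just substitute the explicit automorphisms from Lemma~\ref{lemma-3.2}: if $(0,0,[0))\mathfrak{e}=(s,s,[0))$ then $\mathfrak{e}=\mathfrak{e}_0\mathfrak{a}=\mathfrak{e}_0\mathfrak{h}_s=\mathfrak{e}_0\widetilde{\mathfrak{a}}^{2s}$, and if $(0,0,[0))\mathfrak{e}=(s,s,[1))$ then $\mathfrak{e}=\mathfrak{e}_0\mathfrak{a}=\mathfrak{e}_0\mathfrak{h}_s\widetilde{\mathfrak{a}}=\mathfrak{e}_0\widetilde{\mathfrak{a}}^{2s+1}$, using $\widetilde{\mathfrak{a}}^2=\mathfrak{h}_1$ from Remark~\ref{remark-3.1}\eqref{remark-3.1(3)}. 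The argument is purely formal, so there is no genuine obstacle here; the only points requiring attention are the bookkeeping of the composition order (maps act on the right) and the fact that the real work — showing the image of the distinguished idempotent is again an idempotent of the prescribed form and matching it with an automorphism — has already been carried out in Lemma~\ref{lemma-3.2} (resting on Lemma~1 of \cite{Gutik-Pozdniakova=2021} and Remark~\ref{remark-3.1}).
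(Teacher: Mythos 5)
Your proposal is correct and follows essentially the same route as the paper: apply Lemma~\ref{lemma-3.2} to get the automorphism $\mathfrak{a}$ with $(0,0,[0))\mathfrak{e}=(0,0,[0))\mathfrak{a}$, set $\mathfrak{e}_0=\mathfrak{e}\mathfrak{a}^{-1}$, check it is a $(0,0,[0))$-endomorphism, and recover $\mathfrak{e}=\mathfrak{e}_0\mathfrak{a}$, with the ``moreover'' part read off from the explicit automorphisms in Lemma~\ref{lemma-3.2}. The only cosmetic difference is that the paper justifies passing from $\mathfrak{e}_0=\mathfrak{e}\mathfrak{a}^{-1}$ to $\mathfrak{e}=\mathfrak{e}_0\mathfrak{a}$ via bijectivity of right translations by units in the transformation monoid, while you argue directly by associativity and $\mathfrak{a}^{-1}\mathfrak{a}=\operatorname{id}$, which is the same point.
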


\begin{proof}
By Lemma~\ref{lemma-3.2} there exists an automorphism $\mathfrak{a}$ of the semigroup $\boldsymbol{B}_{\mathbb{Z}}^{\mathscr{F}^2}$ such that $(0,0,[0))\mathfrak{e}=(0,0,[0))\mathfrak{a}$. Then the product $\mathfrak{e}\mathfrak{a}^{-1}$ is a $(0,0,[0))$-endomorphism of $\boldsymbol{B}_{\mathbb{Z}}^{\mathscr{F}^2}$. Let be $\mathfrak{e}_0=\mathfrak{e}\mathfrak{a}^{-1}$. Since for an arbitrary monoid $S$ every right translation $\rho_u\colon S\to S$, $s\mapsto su$ on an element of the group of units of $S$ is a bijective map, we conclude that  the equality $\mathfrak{e}_0=\mathfrak{e}\mathfrak{a}^{-1}$ implies that $\mathfrak{e}=\mathfrak{e}_0\mathfrak{a}$. The last statement follows from the second statemnet of Lemma~\ref{lemma-3.2}.
\end{proof}

Since the composition of two injective maps is an injective map, Theorems~\ref{theorem-2.6} and~\ref{theorem-3.3} imply the following theorem, which describes the structure of all injective endomorphisms of the semigroup $\boldsymbol{B}_{\mathbb{Z}}^{\mathscr{F}^2}$.

\begin{theorem}\label{theorem-3.4}
For any injective endomorphism $\mathfrak{e}$ of the semigroup $\boldsymbol{B}_{\mathbb{Z}}^{\mathscr{F}^2}$ there exist an injective  $(0,0,[0))$-endomorphism $\mathfrak{e}_0$ of $\boldsymbol{B}_{\mathbb{Z}}^{\mathscr{F}^2}$ and an automorphism $\mathfrak{a}$ of $\boldsymbol{B}_{\mathbb{Z}}^{\mathscr{F}^2}$ such that $\mathfrak{e}=\mathfrak{e}_0\mathfrak{a}$. Moreover, $\mathfrak{e}=\mathfrak{e}_0\widetilde{\mathfrak{a}}^{2s}=\mathfrak{e}_0\mathfrak{h}_s$ in the case when $(0,0,[0))\mathfrak{e}=(s,s,[0))$, $\mathfrak{e}=\mathfrak{e}_0\widetilde{\mathfrak{a}}^{2s+1}=\mathfrak{e}_0\mathfrak{h}_{s}\widetilde{\mathfrak{a}}$ in the case when $(0,0,[0))\mathfrak{e}=(s,s,[1))$ for some integer $s$, and one of the following conditions holds:
\begin{enumerate}
  \item\label{theorem-2.3(1)} there exist a positive integer $k$ and $p\in\{0,\ldots,k-1\}$ such that $\mathfrak{e}_0=\alpha_{k,p}$;
  \item\label{theorem-2.3(2)} there exist a positive integer $k\geqslant 2$ and $p\in\{1,\ldots,k-1\}$ such that $\mathfrak{e}_0=\beta_{k,p}$.
\end{enumerate}
\end{theorem}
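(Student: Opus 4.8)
The plan is to combine the two structural results already proved and then invoke the elementary observation that a composition of injections is injective. Concretely, Theorem~\ref{theorem-3.3} shows that an arbitrary endomorphism $\mathfrak{e}$ of $\boldsymbol{B}_{\mathbb{Z}}^{\mathscr{F}^2}$ factors as $\mathfrak{e}=\mathfrak{e}_0\mathfrak{a}$ with $\mathfrak{e}_0$ a $(0,0,[0))$-endomorphism and $\mathfrak{a}$ an automorphism, and it identifies $\mathfrak{a}$ explicitly as $\widetilde{\mathfrak{a}}^{2s}=\mathfrak{h}_s$ or $\widetilde{\mathfrak{a}}^{2s+1}=\mathfrak{h}_s\widetilde{\mathfrak{a}}$ according to whether $(0,0,[0))\mathfrak{e}=(s,s,[0))$ or $(s,s,[1))$. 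So the first step is simply to apply that factorisation to the given injective $\mathfrak{e}$.

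The second step is to check that the factor $\mathfrak{e}_0$ is itself injective. Here I would argue that $\mathfrak{e}_0=\mathfrak{e}\mathfrak{a}^{-1}$ (which is the construction used in the proof of Theorem~\ref{theorem-3.3}): since $\mathfrak{a}$ is an automorphism of $\boldsymbol{B}_{\mathbb{Z}}^{\mathscr{F}^2}$, its inverse $\mathfrak{a}^{-1}$ is a bijection, and the composition of the injection $\mathfrak{e}$ with the bijection $\mathfrak{a}^{-1}$ is again an injection. Thus $\mathfrak{e}_0$ is an injective $(0,0,[0))$-endomorphism, and Theorem~\ref{theorem-2.6} applies to it, yielding that $\mathfrak{e}_0$ equals $\alpha_{k,p}$ for some positive integer $k$ and $p\in\{0,\ldots,k-1\}$, or $\beta_{k,p}$ for some $k\geqslant 2$ and $p\in\{1,\ldots,k-1\}$. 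Feeding this back into the factorisation, together with the explicit description of $\mathfrak{a}$ from Theorem~\ref{theorem-3.3}, gives exactly the statement of Theorem~\ref{theorem-3.4}.

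I do not expect a genuine obstacle here, since the theorem is explicitly flagged in the text as an immediate consequence of Theorems~\ref{theorem-2.6} and~\ref{theorem-3.3}; the only point requiring a word of care is the injectivity of $\mathfrak{e}_0$, which is why I would phrase that step via the bijectivity of $\mathfrak{a}^{-1}$ (equivalently, via the fact that a right translation by an element of the group of units of the monoid of all self-maps under composition is a bijection, echoing the argument already used in the proof of Theorem~\ref{theorem-3.3}). A short proof would therefore read: ``By Theorem~\ref{theorem-3.3}, $\mathfrak{e}=\mathfrak{e}_0\mathfrak{a}$ with $\mathfrak{e}_0=\mathfrak{e}\mathfrak{a}^{-1}$ a $(0,0,[0))$-endomorphism and $\mathfrak{a}$ the stated automorphism. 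Since $\mathfrak{a}^{-1}$ is a bijection and $\mathfrak{e}$ is injective, $\mathfrak{e}_0$ is injective. By Theorem~\ref{theorem-2.6}, $\mathfrak{e}_0=\alpha_{k,p}$ or $\mathfrak{e}_0=\beta_{k,p}$ with the indicated ranges of $k$ and $p$, which completes the proof.''
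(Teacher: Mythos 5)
Your proposal is correct and follows exactly the paper's route: the paper derives Theorem~\ref{theorem-3.4} in one line by noting that the composition of injective maps is injective, so the factor $\mathfrak{e}_0=\mathfrak{e}\mathfrak{a}^{-1}$ from Theorem~\ref{theorem-3.3} is an injective $(0,0,[0))$-endomorphism, to which Theorem~\ref{theorem-2.6} applies. Your added remark spelling out why $\mathfrak{e}_0$ is injective (composition of the injection $\mathfrak{e}$ with the bijection $\mathfrak{a}^{-1}$) is precisely the implicit step in the paper, so there is no substantive difference.
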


%%%%%%%%%%%%%%%%%%%%%%%%%%%%%%%%%%%%%%%%%%%%%%%%%%%%%
%\section*{Acknowledgements}

%The authors acknowledge Alex Ravsky, Taras Banakh and Oleksandr Ganyushkin for their comments and suggestions.
%%%%%%%%%%%%%%%%%%%%%%%%%%%%%%%%%%%%%%%%%%%%%%%%%%%%%%%%%%%%


\begin{thebibliography}{00}

\bibitem{Clifford-Preston-1961}
A. H.~Clifford and  G. B.~Preston,
\emph{The algebraic theory of semigroups},
Vol. I., Amer. Math. Soc. Surveys 7, Pro\-vi\-den\-ce, R.I., 1961.

\bibitem{Clifford-Preston-1967}
A. H.~Clifford and  G. B.~Preston,
\emph{The algebraic theory of semigroups},
Vol. II., Amer. Math. Soc. Surveys 7, Provi\-den\-ce, R.I., 1967.

 


\bibitem{Gutik-Lysetska=2021}
O. Gutik and O. Lysetska,
\emph{On the semigroup $\boldsymbol{B}_{\omega}^{\mathscr{F}}$ which is generated by the family $\mathscr{F}$ of atomic subsets of $\omega$},
Visn. L'viv. Univ., Ser. Mekh.-Mat. \textbf{92} (2021) 34--50 (arXiv:2108.11354).
%

\bibitem{Gutik-Maksymyk-2017}
O. Gutik and K. Maksymyk,
 \emph{On variants of the bicyclic extended semigroup},
Visnyk Lviv. Univ. Ser. Mech.-Mat. \textbf{84} (2017), 22--37.

\bibitem{Gutik-Mykhalenych=2020}
O. Gutik and M. Mykhalenych,
\emph{On some generalization of the bicyclic monoid},
Visnyk Lviv. Univ. Ser. Mech.-Mat. \textbf{90} (2020), 5--19 (in Ukrainian).


\bibitem{Gutik-Mykhalenych=2021}
O. Gutik and M. Mykhalenych,
\emph{On group congruences on the semigroup $\boldsymbol{B}_{\omega}^{\mathscr{F}}$ and its homo\-mor\-phic retracts in the case when the family $\mathscr{F}$ consists of inductive non-empty subsets of~$\omega$},
Visnyk Lviv. Univ. Ser. Mech.-Mat. \textbf{91} (2021), 5--27 (in Ukrainian).


\bibitem{Gutik-Pozdniakova=2021}
O. V. Gutik and I. V. Pozdniakova,
\emph{On the semigroup generating by extended bicyclic semigroup and an $\omega$-closed family},
Mat. Metody Fiz.-Mekh. Polya \textbf{64} (2021), no. 1, 21--34 (in Ukrainian); English version: J. Math. Sci. \textbf{274} (2023), no. 5, 602--617.
DOI: 10.1007/s10958-023-06626-4

\bibitem{Gutik-Pozdniakova=2022}
O. V. Gutik and I. V. Pozdniakova,
\emph{On the semigroup of injective monoid endomorphisms of the monoid $B_{\omega}^{\mathscr{F}}$ with the two-elements family $\mathscr{F}$ of inductive nonempty subsets of $\omega$},
Visn. L'viv. Univ., Ser. Mekh.-Mat. \textbf{94} (2022), 32--55.

\bibitem{Gutik-Pozdniakova=2023}
O. Gutik and I. Pozdniakova,
\emph{On the group of automorphisms of the semigroup $B_{\mathbb{Z}}^{\mathscr{F}}$ with the family $\mathscr{F}$ of inductive nonempty subsets of $\omega$},
Algebra Discrete Math. \textbf{35} (2023), no. 1, 42--61. DOI: 10.12958/adm2010

\bibitem{Gutik-Pozdniakova=2023a}
O. Gutik and I. Pozdniakova,
\emph{On the semigroup of non-injective monoid endomorphisms of the semigroup $B_{\omega}^{\mathscr{F}}$ with the two-elements family $\mathscr{F}$ of inductive nonempty subsets of $\omega$},
Visn. L'viv. Univ., Ser. Mekh.-Mat. \textbf{95} (2023), 14--27.

\bibitem{Gutik-Pozdniakova=2024}
O. Gutik and I. Pozdniakova,
\emph{On the semigroup of all monoid endomorphisms of the semigroup $B_{\omega}^{\mathscr{F}}$ with the two-elements family $\mathscr{F}$ of inductive nonempty subsets of $\omega$},
Visn. L'viv. Univ., Ser. Mekh.-Mat. \textbf{96} (2024), 5--24.


\bibitem{Gutik-Prokhorenkova-Sekh=2021}
O. Gutik, O. Prokhorenkova, and D. Sekh,
\emph{On endomorphisms of the bicyclic semigroup and the extended bicyclic semigroup},
Visn. L'viv. Univ., Ser. Mekh.-Mat. \textbf{92} (2021) 5--16 (arXiv:2202.00073) (in Ukrainian).

%\bibitem{Harzheim=2005}
%E. Harzheim,
%\emph{Ordered sets},
%Springer, New-York, Advances in Math. \textbf{7}, 2005.

\bibitem{Lawson=1998}
M.~Lawson,
\emph{Inverse semigroups. The theory of partial symmetries},
World Scientific, Singapore, 1998.

\bibitem{Lysetska=2020}
O. Lysetska,
\emph{On feebly compact topologies on the semigroup $\boldsymbol{B}_{\omega}^{\mathscr{F}_1}$},
Visnyk Lviv. Univ. Ser. Mech.-Mat. \textbf{90} (2020), 48--56.


\bibitem{Petrich-1984}
M.~Petrich,
\emph{Inverse semigroups},
John Wiley $\&$ Sons, New York, 1984.

%\bibitem{Magnus-Karrass-Solitar-1976}
%W. Magnus, A. Karrass, and D. Solitar,
%\emph{Combinatorial group theory: Presentations of groups in terms of generators and relations},
%Dover Publ., 1976.

\bibitem{Wagner-1952}
V.~V. Wagner,
\textit{Generalized groups},
Dokl. Akad. Nauk SSSR \textbf{84} (1952), 1119--1122 (in Russian).

\bibitem{Warne-1968}
R. J. Warne,
\emph{$I$-bisimple semigroups},
Trans. Amer. Math. Soc.  \textbf{130} (1968), no. 3, 367--386.

\end{thebibliography}
\end{document}